\newtheorem*{rep@theorem}{\rep@title}
\newcommand{\newreptheorem}[2]{
  \newenvironment{rep#1}[1]{
    \def\rep@title{#2 \ref{##1}}
    \begin{rep@theorem}}
    {\end{rep@theorem}}}
\begin{document}
\title{Holonomy of Manifolds with Density}
\author{Dmytro Yeroshkin}\thanks{The author received funding from Excellence of Science grant number 30950721, "Symplectic Techniques"}
\email{Dmytro.Yeroshkin@ulb.ac.be}
\address{Geometri\'e Differentielle, Universit\'e Libre de Bruxelles}
\begin{abstract}
In this paper we discuss some examples and general properties of holonomy groups of $\nabla^\phi$ introduced by Wylie and the author, the connection corresponding to the $N=1$ Bakry-\'Emery Ricci curvature, and also Wylie's $\bar{\sec}_f$. In particular we classify all possible holonomy groups in dimension 2 and also provide two infinite families: $SL_n(\bbR)$ and $SO^+(p,q)$.
\end{abstract}
\maketitle

\makeatletter
\providecommand\@dotsep{5}
\makeatother

%\tableofcontents
%\listoftodos\relax

\section{Introduction}\label{sec:Intro}

In the 1970's, Lichnerowicz \cite{Lichnerowicz70,Lichnerowicz72} studied the Ricci curvature of Riemannian manifolds with a smooth positive density function (usualy denoted $e^{-f}$). This was later generalized in the work of Bakry and \'Emery \cite{BE}. The main object of this study is what is now called Bakry-\'Emery Ricci curvature:
\[
  \Ric_f^N = \Ric + \Hess f - \frac{df\otimes df}{N-n},
\]
where $N$ has traditionally been viewed as a parameter in $(n,\infty]$. More recently several authors have considered $N<n$. See for example \cite{KolesnikovMilman, Milman1, Ohta, WSecDens}.

In \cite{WYDens}, Wylie and the author introduced a torsion-free affine connection whose Ricci tensor is $\Ric_f^1$:
\[
  \nabla^\phi_X Y = \nabla_X Y - d\phi(X)Y - d\phi(Y)X,
\]
where $\phi = \frac{f}{n-1}$, and $\nabla$ denotes the Levi-Civita connection. In that paper, and later in \cite{KWYDens}, the connection was used for results in multiple aspects of geometry. In this paper, our focus will be on the holonomy of this connection.

Given a manifold $M$, with an affine connection $\nabla$, recall that the holonomy group at $p\in M$ is the group of all linear maps $P_\sigma:T_pM\to T_p M$ obtained by parallel translation along a piece-wise $C^1$ loop $\sigma:[0,T]\to M$ based at $p$. That is, solving the equation $\nabla_{\dot{\sigma}(t)} U(t) = 0$, and mapping $U(0)$ to $U(T)$.

In the Rimannian case, the possible holonomy groups of simply-connected manifolds were fully classified by Berger in \cite{BHol}. In the more general setting of torsion-free affine connections, the possible irreducible groups were fully classified by the work of Merkulov and Schwachh\"offer \cite{MeSch} and Bryant \cite{Bryant}.

In \cite{WYDens}, the author and Wylie proved that the holonomy group of $\nabla^\phi$, which we will denote as $Hol^\phi$ lies inside $SL_n(\bbR)$ when $M$ is orientable, and inside $\{A\in GL_n(\bbR)|\det A = \pm 1\}$ when $M$ is non-orientable. The key to that proof was the construction of a (local) volume form that was $\nabla^\phi$-parallel. One observation arising from \cite{WYDens} is that unlike in the Riemannian case, $Hol^\phi$ reducible does not imply decomposable. In particular, in this paper we will see an example where the holonomy group is the Heisenberg group.

The following are the main classification results we obtain:

\begin{thm}\label{thm:2d}
  Every connected subgroup of $SL_2(\bbR)$ can arise as $Hol^\phi$ of a 2-dimensional manifold with density.
\end{thm}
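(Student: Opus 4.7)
Up to conjugacy, the connected Lie subgroups of $SL_2(\bbR)$ are the trivial group, three one-parameter subgroups (the elliptic $SO(2)$, the hyperbolic Cartan, and the unipotent subgroup), the two-dimensional Borel subgroup, and $SL_2(\bbR)$ itself. I plan to exhibit a $2$-dimensional manifold with density realizing each. The main tool is Ambrose--Singer combined with the dimensional accident that in dimension two $\Lambda^2 T^*M$ is a line, so at each point $p$ the curvature $R^\phi_p$ is a single endomorphism in $\mathfrak{sl}_2(\bbR)$; the holonomy algebra is then spanned, after parallel transport to a base point, by the values of $R^\phi$ as $p$ varies over $M$.

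Three cases are essentially immediate. The trivial group is realized by $(\bbR^2,g_{\text{std}})$ with $\phi\equiv 0$. The group $SO(2)$ is realized by any non-flat Riemannian surface with $\phi\equiv 0$, since then $\nabla^\phi=\nabla$ and standard Riemannian holonomy applies. All of $SL_2(\bbR)$ is realized by any $(g,\phi)$ for which $R^\phi$ and its first covariant derivative at a single point already span $\mathfrak{sl}_2(\bbR)$; this condition is open in the smooth data, so I would simply exhibit one explicit example and invoke Ambrose--Singer.

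The three remaining cases (hyperbolic, unipotent, Borel) form the substance of the proof. For these I would start from a warped-product metric $g = dx^2 + h(x)^2\,dy^2$ on $\bbR^2$ with $\phi=\phi(x)$, so that translations in $y$ act by isometries preserving $\phi$. A direct computation of the connection coefficients of $\nabla^\phi$ in the coordinate frame shows that $\mathrm{span}\{\partial_x\}$ is $\nabla^\phi$-invariant precisely when $h'/h=\phi'$, i.e.\ $h=e^\phi$ up to a multiplicative constant; in this situation the single curvature matrix is automatically nilpotent, so tuning $\phi$ so that this one component is non-zero realizes the unipotent case. For the hyperbolic Cartan I would instead use an ansatz preserving both coordinate line fields simultaneously (for instance a diagonal metric $g=E(x)\,dx^2+G(y)\,dy^2$ with a separable $\phi$), so that the curvature lies in the diagonal subalgebra. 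For the Borel case I would relax the pure $y$-translational symmetry while keeping invariance of the single line field $\mathrm{span}\{\partial_x\}$, which turns on an independent diagonal component of $R^\phi$ alongside the off-diagonal one, promoting the holonomy from the nilpotent radical to the full Borel subgroup.

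The main obstacle is the upper bound on holonomy in these intermediate cases: forcing $R^\phi$ into a chosen subalgebra at one point is easy, but one must ensure that this confinement persists globally and that parallel transport preserves the associated flag. The symmetric ansatz is designed precisely for this, since when the preserved line field is the eigenspace of an isometry commuting with $\nabla^\phi$ it is automatically preserved along every piecewise $C^1$ loop. The remaining work is then explicit: compute $R^\phi$ for each ansatz and choose the free functions so that exactly the correct number of components of $R^\phi$ are non-zero and independent on $M$, yielding the target subalgebra on the nose.
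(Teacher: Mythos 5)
Your list of connected subgroups and your treatment of the easy cases are fine: the trivial and $SO(2)$ cases are exactly as in the paper, and your unipotent ansatz checks out (for $g=dx^2+e^{2\phi(x)}dy^2$ the line field $\mathrm{span}\{\partial_x\}$ is indeed $\nabla^\phi$-parallel and the curvature is strictly triangular, nonzero when $\phi''\neq 0$); this is the same triangular mechanism as the paper's Example~\ref{ex:R}, just with a different metric. For the Borel and $SL_2(\bbR)$ cases your plan is plausible but stops short of the actual content: the paper's proof of Theorem~\ref{thm:2d} consists precisely of explicit examples (Examples~\ref{ex:2d} and~\ref{ex:SL2}) together with explicit parallel-transport computations along concrete loops (carried out in the appendix) showing the holonomy is no smaller than claimed. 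Saying "the condition is open, exhibit one example and invoke Ambrose--Singer" defers exactly the part that constitutes the proof; it is not wrong, but nothing is verified.

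The genuine gap is the hyperbolic case $SO^+(1,1)$. Your proposed ansatz $g=E(x)\,dx^2+G(y)\,dy^2$ with separable $\phi$ cannot produce it: if you require both coordinate line fields to be $\nabla^\phi$-parallel (which is what you need to trap the holonomy in the diagonal subgroup), the conditions are $E_y=0$, $G_x=0$, $\phi_x=\tfrac{G_x}{2G}$ and $\phi_y=\tfrac{E_y}{2E}$, which force $\phi$ constant and $g$ a flat product, hence trivial holonomy. Merely making the curvature endomorphism diagonal at each point gives no upper bound, since Ambrose--Singer conjugates curvature by parallel transports, and these preserve the eigendirections only if those line fields are themselves parallel; likewise your general principle that a line field of eigenvectors of an isometry commuting with $\nabla^\phi$ is "automatically preserved along every loop" is false --- invariance under an isometry does not imply parallelism. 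To confine $Hol^\phi$ to $SO^+(1,1)$ you need a $\nabla^\phi$-parallel Lorentzian structure (equivalently two transversal parallel null line fields, which will not be $g$-orthogonal coordinate directions), and this is obtained in the paper by an idea missing from your plan: the Dini/Levi-Civita projective-equivalence construction of Section~\ref{sec:Opq}, applied in Example~\ref{ex:SO+} with $g=(3+\cos y)(dx^2+dy^2)$ and $\phi=\tfrac12\log(2+\cos y)$, so that $\nabla^\phi$ is literally the Levi-Civita connection of an explicit signature-$(1,1)$ metric $\tilde g$, with $\Ric^\phi\neq 0$ then forcing the holonomy to be all of $SO^+(1,1)$. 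You would need either to import that projective-equivalence tool or to build a parallel Lorentzian metric by hand; your current symmetric ansatz provably cannot do it.
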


\begin{thm}\label{thm:3d}
  Every 3-dimensional candidate given by Merkulov and Schwachh\"offer \cite{MeSch} that lies inside $SL_3(\bbR)$ can arise as $Hol^\phi$.
\end{thm}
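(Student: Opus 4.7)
The plan is to proceed by enumeration. First I would compile, from the Merkulov--Schwachh\"offer list \cite{MeSch} together with the standard reducible cases, every connected candidate for the holonomy of a torsion-free affine connection on a $3$-manifold that lies inside $SL_3(\bbR)$. Among the irreducible candidates this gives a short list: at minimum $SL_3(\bbR)$ itself, $SO(3)$, and $SO^+(1,2)$. Among the reducible candidates one must list the connected subgroups of $SL_3(\bbR)$ preserving a line or a plane in $\bbR^3$; these include the trivial group, the one-parameter unipotent groups, the Heisenberg group, and various products or semidirect products built from $SL_2(\bbR)$, $SO(2)$, and $SO^+(1,1)$ with a complementary one-dimensional factor.

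For each candidate the goal is to exhibit a concrete $(M,g,e^{-f})$ realizing it as $Hol^\phi$. The two irreducible candidates $SL_3(\bbR)$ and $SO^+(1,2)$ are covered by the infinite families $SL_n(\bbR)$ and $SO^+(p,q)$ announced in the abstract, specialized to $n=3$ and $(p,q)=(1,2)$. The case $SO(3)$ is immediate from the round $S^3$ with $f$ constant, since then $\nabla^\phi=\nabla$ and the Riemannian holonomy is already $SO(3)$; similarly, flat $\bbR^3$ with constant $f$ gives the trivial holonomy.

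The fully decomposable reducible candidates reduce to the $2$-dimensional classification: taking a product metric on $M^2\times\bbR$ with density pulled back from the $M^2$ factor yields $Hol^\phi=H\times\{1\}$ for any $H$ appearing in Theorem \ref{thm:2d}, and this accounts for most reducible groups that preserve a splitting of $\bbR^3$. For the remaining indecomposable reducible candidates (including the Heisenberg example flagged in the introduction), the approach is to use an adapted frame on a warped-type metric $g=a(x)\,dx^2+b(x)\,dy^2+c(x)\,dz^2$ with $f=f(x)$, compute the curvature of $\nabla^\phi$ explicitly, and tune the free functions so that the curvature endomorphism together with its $\nabla^\phi$-derivatives spans the prescribed Lie subalgebra in the sense of Ambrose--Singer.

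The main obstacle is precisely this last step. For the indecomposable reducible groups one needs the curvature data to land in exactly the target subalgebra of $\mathfrak{sl}_3(\bbR)$ --- not strictly smaller (so the holonomy is no larger than advertised, which is automatic from the preserved flag structure) and not strictly larger (which requires a delicate choice of $f$). The Heisenberg case is the most subtle: here the holonomy must preserve a full flag and act unipotently but non-trivially on two graded pieces, so $\phi$ must be chosen so that the associated curvature endomorphism is strictly upper triangular with the correct nonzero entries. I would set this up by fixing an explicit parallel flag coming from the warped structure and solving an ODE in $f$ so that the resulting nilpotent part generates the Heisenberg algebra.
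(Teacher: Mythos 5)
Your treatment of the cases that the theorem actually covers coincides with the paper's proof: the Merkulov--Schwachh\"offer classification is a classification of \emph{irreducible} holonomies of torsion-free affine connections, so in dimension $3$ the only candidates inside $SL_3(\bbR)$ are $SO(3)$, $SO^+(2,1)$ and $SL_3(\bbR)$; the paper disposes of $SO(3)$ exactly as you do (it is a Riemannian holonomy, take $\phi$ constant), and of the other two by citing the infinite families $SL_n(\bbR)$ (Proposition~\ref{prop:sln}) and $SO^+(p,q)$ (Proposition~\ref{prop:sopq}), which is also your route. Where your proposal diverges is in scope: the reducible subgroups you enumerate (unipotent groups, Heisenberg, products with $2$-dimensional factors, etc.) are not ``candidates given by Merkulov and Schwachh\"offer'' and are not part of this theorem, so that portion of your plan is unnecessary. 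It is worth noting that, had it been necessary, it would also be a genuine gap as written: for the indecomposable reducible groups you only describe the strategy (``tune the free functions'', ``solve an ODE in $f$'') and explicitly acknowledge that the delicate step of pinning the curvature data to exactly the target subalgebra is not carried out. The paper does realize the Heisenberg group, but as a standalone example (Example~\ref{ex:Heis}, via the explicit metric $e^{x}dx^2+e^{2x+y}dy^2+e^{2x+2y+z}dz^2$ with $\phi=x+y+z$, a computed loop, and a self-duality argument), not as part of Theorem~\ref{thm:3d}, and it makes no claim to classify all reducible $3$-dimensional weighted holonomies.
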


\begin{thm}\label{thm:infam}
    $SL_n(\bbR)$ and $SO^+(p,q)$ can occur as $Hol^\phi$ of a Riemannian manifold with density for every $n$ and every pair $(p,q)$.
\end{thm}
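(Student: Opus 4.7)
The plan is to address the two families independently. In both cases the inclusion $Hol^\phi \subseteq SL_n(\bbR)$ established in \cite{WYDens} reduces the task to producing a Riemannian manifold with density whose $\nabla^\phi$-holonomy algebra is at least as large as the claimed Lie algebra; connectedness of $SL_n(\bbR)$ and of $SO^+(p,q)$ then upgrades a Lie algebra equality to a group equality.

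For $SL_n(\bbR)$, my first approach would be to take $M=\bbR^n$ equipped with a polynomial perturbation of the Euclidean metric together with a polynomial density, and to verify, for a Zariski-open choice of their Taylor coefficients, that the curvature endomorphisms $R^\phi(X,Y)$ together with finitely many iterated $\nabla^\phi$-derivatives at the origin already span all $n^2-1$ dimensions of $\mathfrak{sl}_n(\bbR)$. Ambrose--Singer would then force $\mathfrak{hol}^\phi = \mathfrak{sl}_n(\bbR)$. The parameter count is comfortable, so a transversality argument should go through; the care is needed in checking that the algebraic symmetries of $R^\phi$ (Bianchi-type identities for torsion-free connections, together with the $\mathfrak{sl}_n$-valuedness coming from the parallel volume form) do not conspire to trap everything in some proper Lie subalgebra.

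For $SO^+(p,q)$ the plan is to exploit projective equivalence. A direct computation shows that $\nabla^\phi$ is projectively equivalent to $\nabla^g$ with \emph{exact} projective $1$-form $-d\phi$; conversely, the classical Beltrami--Dini formula
\[
  \tilde\Gamma^k_{ij} - \Gamma^k_{ij} = \alpha_i\delta^k_j + \alpha_j\delta^k_i, \qquad \alpha = d\psi,\quad \psi = \tfrac{1}{2(n+1)}\log\Big|\tfrac{\det\tilde g}{\det g}\Big|,
\]
ensures that for any two projectively equivalent metrics $g, \tilde g$ (Riemannian or pseudo-Riemannian), the projective $1$-form is automatically exact. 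Thus if $g$ is Riemannian and $\tilde g$ is pseudo-Riemannian of signature $(p,q)$ on the same manifold and projectively equivalent to $g$, then $\phi = -\psi$ gives $\nabla^\phi = \nabla^{\tilde g}$ and hence $Hol^\phi = Hol(\nabla^{\tilde g})$.

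I would then take $\tilde g$ to be a simply connected pseudo-Riemannian constant-curvature space form of signature $(p,q)$ and nonzero sectional curvature, realized on a simply connected open $U\subset\bbR^n$ via a Beltrami--Klein-type chart in which $\tilde g$-geodesics are Euclidean straight lines, and let $g$ be the Euclidean metric on $U$, whose geodesics are also straight lines. These are projectively equivalent, so the construction above applies and $\phi$ is a well-defined smooth function on $U$. The standard computation $R^{\tilde g}(X,Y)Z = K(\tilde g(Y,Z)X - \tilde g(X,Z)Y)$ shows the curvature endomorphisms span $\mathfrak{so}(p,q)$ as a vector space, so $Hol(\nabla^{\tilde g}) = SO^+(p,q)$ on $U$, completing this case. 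The main obstacle I anticipate is genuinely the $SL_n(\bbR)$ step, where merely counting parameters is not enough and one must explicitly exhibit the required $n^2-1$ linearly independent endomorphisms in the holonomy algebra.
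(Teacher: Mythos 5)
Your $SO^+(p,q)$ half is correct, and it takes a genuinely different route from the paper. The paper invokes Levi-Civita's explicit normal form for projectively equivalent metrics (Propositions~\ref{prop:LC} and~\ref{prop:LC-W}), builds a concrete example on $\bbR^n$ with $\phi_n=n+\cos x_n$, and then generates $\mathfrak{so}(p,q)$ by brackets of elements coming from two-dimensional totally geodesic submanifolds. You instead use the general Weyl/Beltrami fact that the projective $1$-form relating two projectively equivalent \emph{metric} connections is automatically exact, with $\psi=\tfrac{1}{2(n+1)}\log\bigl|\det\tilde g/\det g\bigr|$, so that $\phi=-\psi$ turns the Euclidean metric on a Beltrami--Klein chart of a signature-$(p,q)$ constant-curvature space form into a manifold with density whose $\nabla^\phi$ \emph{is} the space form's Levi-Civita connection; constant nonzero curvature then makes the curvature operators span $\mathfrak{so}(p,q)$ and Ambrose--Singer finishes it. This is cleaner than the paper's computation for this particular statement (no coordinate normal form, no bracket bookkeeping), at the price of being tied to the constant-curvature model, whereas the paper's Levi-Civita machinery is set up once and reused elsewhere.

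The $SL_n(\bbR)$ half, however, has a genuine gap, and it is exactly the one you flag yourself. Spanning of $\mathfrak{sl}_n(\bbR)$ by curvature endomorphisms and finitely many covariant derivatives at a point is an open condition on jets, but openness is worthless without non-emptiness, and non-emptiness is precisely the content of the theorem: you must exhibit at least one pair $(g,\phi)$ for which the span is not trapped in a proper subalgebra. Parameter counting cannot substitute for this --- the Riemannian case itself shows how algebraic constraints on the curvature of a structured connection collapse ``generic'' holonomy to a much smaller group, and here the connection is constrained to the special form $\nabla^\phi$ with $g$ Riemannian, so a priori it is not even clear the Zariski-open locus meets this constrained family. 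The paper closes this gap constructively: it takes $(S^n,dr^2+\sin^2 r\,g_{S^{n-1}},\cos r)$, uses the explicit $S^2$ computation of Example~\ref{ex:SL2} giving $\mathfrak{sl}_2(\bbR)$, and then propagates it to dimension $n$ via the totally geodesic submanifold structure result (Theorem~\ref{thm:totgeod} together with Lemma~\ref{lem:SkSn}), obtaining $E_{1i},E_{i1},E_{ii}-E_{11}\in\mathfrak{hol}^\phi$ for every $i$ and generating the rest by brackets $[E_{i1},E_{1j}]=E_{ij}$. Without an explicit example of this kind (or some other concrete verification of spanning), your $SL_n(\bbR)$ case remains a plausible program rather than a proof.
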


This paper is organized as follows. In Section~\ref{sec:Duality}, we introduce a notion of a dual connection, following the work of Nagaoka and Amari \cite{NADual}, we then show how that notion relates to manifolds with density, and was in fact utilized without being named in \cite{WYDens, KWYDens}. In Section~\ref{sec:General}, we provide some general tools for constructing new examples of $Hol^\phi$, as well as some obstructions that can help in search for specific examples. Finally, in Section~\ref{sec:Ex}, we construct specific examples of weighted holonomy. In the appendix we provide the details of the holonomy computations for all the examples.

%%% Local Variables:
%%% mode: latex
%%% TeX-master: "Paper"
%%% End:

\section{Duality}\label{sec:Duality}

The notion of duality of connections was first introduced by Nagaoka and Amari in \cite{NADual}, in the context of smooth families of probability distributions. This construction now plays an important role in the field of Information Geometry. For an introduction to the subject, we refer the reader to \cite{InfGeom}.

\begin{dfn}\label{dfn:Duality}
  Let $(M,g)$ be a Riemannian manifold, two affine connections $\nabla,\nabla^*$ on $M$ are called dual with respect to $g$ if
  \[
    D_X g(Y,Z) = g(\nabla_X Y, Z) + g(Y,\nabla^*_X Z)
  \]
\end{dfn}

We now prove some properties of duality, basing our approach on the work of Lauritzen~\cite{Lauritzen}.

\begin{prop}[\cite{Lauritzen}]\label{prop:DualCod}
  Let $(M,g)$ be a Riemannian manifold, and $\nabla$ a torsion-free affine connection on $M$. Furthermore, suppose that $g$ is $\nabla$-Codazzi, that is $D(X,Y,Z) = (\nabla_X g)(Y,Z)$ is a symmetric 3-tensor. Let $\nabla^*$ be the dual of $\nabla$ with respect to $g$. Then, we obtain the following:

  \begin{enumerate}
  \item $\nabla^*$ is torsion-free

  \item $g$ is $\nabla^*$-Codazzi

  \item $g(\nabla^*_X Y,Z) - g(\nabla_X Y, Z) = D(X,Y,Z)$
  \end{enumerate}
\end{prop}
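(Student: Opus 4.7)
The plan is to reduce everything to the difference tensor
\[
S(X,Y) := \nabla^*_X Y - \nabla_X Y,
\]
which is a genuine $(1,2)$-tensor since both $\nabla$ and $\nabla^*$ are affine (the Leibniz terms cancel in the difference). Unpacking the duality identity of Definition~\ref{dfn:Duality} against the general formula $X g(Y,Z) = g(\nabla_X Y, Z) + g(Y, \nabla_X Z) + (\nabla_X g)(Y,Z)$ immediately gives
\[
(\nabla_X g)(Y,Z) = g(Y, S(X,Z)),
\]
so $D(X,Y,Z) = g(Y, S(X,Z))$. Now $\nabla_X g$ is automatically symmetric in its last two arguments (because $g$ is), so swapping $Y \leftrightarrow Z$ and using symmetry of $g$ rewrites this as $D(X,Y,Z) = g(Z, S(X,Y))$, which is exactly assertion (3). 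Note that so far the Codazzi hypothesis has not been used.

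For (1), I would expand the torsion of $\nabla^*$. Since $\nabla$ is torsion-free, that torsion is exactly $S(X,Y) - S(Y,X)$, so torsion-freeness of $\nabla^*$ is equivalent to symmetry of $S$ in its two lower arguments. By (3), this symmetry translates to $D(X,Y,Z) = D(Y,X,Z)$, and combined with the automatic symmetry in the last two slots derived above, this is precisely the $\nabla$-Codazzi hypothesis.

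For (2), I would compute $(\nabla^*_X g)(Y,Z)$ directly: using the duality identity to rewrite $X g(Y,Z)$ and then (3) to rewrite $g(\nabla^*_X Y, Z) - g(\nabla_X Y, Z)$, the two copies of $g(Y, \nabla^*_X Z)$ cancel and the remainder collapses to $(\nabla^*_X g)(Y,Z) = -D(X,Y,Z)$. Hence $D^* = -D$ as a $(0,3)$-tensor and inherits total symmetry from $D$, giving (2).

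The proof is essentially bookkeeping and I do not expect a genuine obstacle. The only conceptual point worth flagging is the clean split between the two symmetries of $D$: symmetry in the bottom two slots comes for free from symmetry of $g$, while the Codazzi assumption supplies the remaining slot-symmetry, which is precisely what is needed to upgrade $\nabla^*$ from a bare affine connection to a torsion-free one and which is simultaneously inherited by $D^* = -D$.
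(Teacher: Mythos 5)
Your proposal is correct and follows essentially the same route as the paper: duality plus the symmetry of $g$ identifies the difference $g(\nabla^*_XY-\nabla_XY,Z)$ with $D(X,Y,Z)$ (part 3, needing no Codazzi hypothesis), from which $\nabla^*g=-D$ gives part 2 and symmetry of $D$ in its first two slots gives equality of the torsions, hence part 1. Packaging the argument through the difference tensor $S(X,Y)=\nabla^*_XY-\nabla_XY$ is only a cosmetic reorganization of the paper's computations.
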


\begin{rmk}
  The triple $(M,g,\nabla)$ in the proposition above is referred to as a statistical manifold in literature, following the work of Lauritzen \cite{Lauritzen}. In that context, the 3-tensor $D(X,Y,Z)$ is called the Amari-Chentsov tensor.
\end{rmk}

\begin{proof}
  Assume $(M,g,\nabla)$ is a triple as above, and $D = \nabla g$.

  We prove part 3 first:
  \begin{align*}
    g(\nabla^*_X Y,Z) - g(\nabla_X Y,Z) &= D_Xg(Y,Z) - g(Y,\nabla_X Z) - g(\nabla_X Y,Z)\\
                                        &=(\nabla_X g)(Y,Z) = D(X,Y,Z)
  \end{align*}
  For part 2, observe that
  \begin{align*}
    D(X,Y,Z) &= g(\nabla^*_X Y,Z) - g(\nabla_X Y,Z)\\
             &= g(\nabla^*_X Y,Z) - D_Xg(Y,Z) + g(Y,\nabla^*_X Z)\\
             &= -(\nabla^*_X g)(Y,Z)
  \end{align*}
  so, $g$ is $\nabla^*$-Codazzi, with $\nabla^* g = -D$.

  For part 1, we note that
  \begin{align*}
    g(\nabla^*_X Y - \nabla^*_Y X,Z) &= g(\nabla^*_X Y,Z) - g(\nabla^*_Y X,Z)\\
                                     &= D(X,Y,Z) + g(\nabla_X Y,Z) - D(Y,X,Z) - g(\nabla_Y X, Z)\\
                                     &=g(\nabla_X Y - \nabla_Y X, Z)
  \end{align*}
  so, the torsions of $\nabla$ and $\nabla^*$ are the same, and we assumed that $\nabla$ was torsion-free, then so is $\nabla^*$.
\end{proof}

\begin{prop}\label{prop:DualPar}
  Let $(M,g)$ be a Riemannian manifold, and $\nabla,\nabla^*$ affine connections on $M$ dual with respect to $g$. Furthermore, let $\gamma$ be any path along $M$, then
  \[
    g\left(P_\gamma^\nabla X, P_\gamma^{\nabla^*} Y\right) = g(X,Y)
  \]
  where $P_\gamma^\nabla$ (resp. $P_\gamma^{\nabla^*}$) denotes parallel translation along $\gamma$ with respect to $\nabla$ (resp. $\nabla^*$).
\end{prop}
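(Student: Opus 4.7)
The plan is to fix the path $\gamma:[0,T]\to M$ with $\gamma(0)=p$, $\gamma(T)=q$, pick $X\in T_pM$ and $Y\in T_pM$, and then consider the two vector fields along $\gamma$ defined by $X(t) = P^\nabla_{\gamma|_{[0,t]}}X$ and $Y(t) = P^{\nabla^*}_{\gamma|_{[0,t]}}Y$. By definition of parallel transport these satisfy $\nabla_{\dot\gamma(t)}X(t)=0$ and $\nabla^*_{\dot\gamma(t)}Y(t)=0$ for all $t$, and they have initial values $X(0)=X$, $Y(0)=Y$.

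Next I would introduce the scalar function $h(t):=g(X(t),Y(t))$ along $\gamma$ and differentiate. Applying the duality relation from Definition~\ref{dfn:Duality} with the ambient vector field $\dot\gamma(t)$ in the first slot gives
\[
  h'(t) = D_{\dot\gamma(t)}g(X(t),Y(t)) = g(\nabla_{\dot\gamma(t)}X(t),Y(t)) + g(X(t),\nabla^*_{\dot\gamma(t)}Y(t)),
\]
and both terms vanish by the parallelism conditions above. Hence $h$ is constant on $[0,T]$, so $h(T)=h(0)$, which is exactly the claimed identity.

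The only mild subtlety is that the duality equation in Definition~\ref{dfn:Duality} is written for vector fields on $M$, whereas $X(t),Y(t)$ are a priori only defined along $\gamma$. This is routine: one either extends $X(t)$ and $Y(t)$ to local vector fields near each point of $\gamma$ (possible where $\dot\gamma\neq 0$) and observes that the identity involves only the covariant derivatives along $\dot\gamma$, which depend only on the values along $\gamma$, or one simply takes the tensorial form of the duality equation as the definition of what $D_{\dot\gamma}g$ means on such fields. For piecewise $C^1$ curves $\gamma$ the argument is applied on each smooth piece and concatenated, which works because $h$ is continuous at the breakpoints. I do not expect any real obstacle; the proof is essentially a one-line computation once the right function $h(t)$ is written down.
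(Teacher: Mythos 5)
Your proof is correct and is essentially identical to the paper's: the paper also takes $\nabla$- and $\nabla^*$-parallel fields $X(t),Y(t)$ along $\gamma$, differentiates $g(X(t),Y(t))$ using the duality identity, and concludes it is constant. Your added remarks about extending fields along the curve and handling piecewise-$C^1$ breakpoints are routine details the paper leaves implicit.
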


\begin{proof}
  Let $X(t)$ (resp. $Y(t)$) be $\nabla$ (resp. $\nabla^*$) parallel vector fields along $\gamma$, then
  \begin{align*}
    \frac{d}{dt} g(X(t),Y(t)) &= D_{\dot\gamma}g(X(t),Y(t))\\
                              &= g(\nabla_{\dot\gamma}X(t),Y(t)) + g(X(t),\nabla^*_{\dot\gamma}Y(t))\\
                              &= 0
  \end{align*}
  So, $g(X(t),Y(t))$ is constant, which completes the proof.
\end{proof}

For this paper, the following corollary of the above proposition will allow us to use some shortcuts in constructing examples of $Hol^\phi$ in Section~\ref{sec:Ex}.

\begin{cor}\label{cor:DualHol}
  Let $\nabla$, $\nabla^*$ be dual with respect to $g$, then $Hol_p^\nabla\cong Hol_p^{\nabla^*}$, with the isomorphism $\phi:Hol_p^\nabla\to Hol_p^{\nabla^*}$ provided by:
  \[
    \psi(A) = \left(A^*\right)^{-1},
  \]
  where $*$ denotes the adjoint with respect to $g$.
\end{cor}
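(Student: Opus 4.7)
The plan is to read off the isomorphism directly from Proposition~\ref{prop:DualPar}. Given any piecewise $C^1$ loop $\sigma$ based at $p$, the proposition asserts
\[
  g\bigl(P_\sigma^\nabla X,\, P_\sigma^{\nabla^*} Y\bigr) = g(X,Y) \qquad \text{for all } X,Y \in T_pM.
\]
Setting $A = P_\sigma^\nabla$ and $B = P_\sigma^{\nabla^*}$, this says $g(AX, BY) = g(X,Y)$, i.e.\ $A^*B = \mathrm{Id}$, so $B = (A^*)^{-1}$. In other words, the parallel transports of the \emph{same} loop with respect to $\nabla$ and $\nabla^*$ are automatically related by $\psi$.

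From this I would define $\psi(A) = (A^*)^{-1}$ on $Hol_p^\nabla$ and verify, in order: (i) well-definedness and range, since the observation above shows that whenever $A = P_\sigma^\nabla$ lies in $Hol_p^\nabla$, the element $(A^*)^{-1}$ is realized as $P_\sigma^{\nabla^*}$ and hence lies in $Hol_p^{\nabla^*}$; (ii) the homomorphism property, which reduces to
\[
  \psi(AA') = \bigl((AA')^*\bigr)^{-1} = (A'^* A^*)^{-1} = (A^*)^{-1}(A'^*)^{-1} = \psi(A)\psi(A');
\]
(iii) surjectivity, which is immediate from the same correspondence applied in the reverse direction (every element of $Hol_p^{\nabla^*}$ comes from some loop $\sigma$, and the $\nabla$-parallel transport along that loop maps to it under $\psi$); (iv) injectivity, since $(A^*)^{-1} = \mathrm{Id}$ forces $A = \mathrm{Id}$.

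There is really no substantial obstacle here: the only mildly delicate point is observing that the map $A \mapsto (A^*)^{-1}$ automatically lands in $Hol_p^{\nabla^*}$ rather than in some larger subgroup of $GL(T_pM)$, and this is taken care of by the fact that the $\nabla$- and $\nabla^*$-parallel transports of a \emph{common} loop are paired by the identity $A^*B = \mathrm{Id}$. The rest is a formal check that $\psi$ is a group isomorphism.
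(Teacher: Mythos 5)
Your proposal is correct and follows exactly the route the paper intends: the corollary is read off from Proposition~\ref{prop:DualPar} by noting that $g(P_\sigma^\nabla X, P_\sigma^{\nabla^*}Y)=g(X,Y)$ for all $X,Y$ forces $(P_\sigma^\nabla)^*P_\sigma^{\nabla^*}=\mathrm{Id}$, so $\psi(A)=(A^*)^{-1}$ pairs the two parallel transports of a common loop. The remaining checks (well-definedness, homomorphism, bijectivity) are the routine ones the paper leaves implicit, and you carry them out correctly.
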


\begin{prop}\label{prop:Duality}
  The weighted connections for $(M,g,\phi)$ and $(M,e^{-2\phi}g,-\phi)$ (denoted by $\nabla^{\phi}$ and $\nabla^{-\phi}$ respectively) are dual with respect to the metric $e^{-\phi}g$.
\end{prop}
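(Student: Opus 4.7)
The plan is to compute $\nabla^{-\phi}$ explicitly in terms of the original Levi-Civita connection $\nabla$ of $g$, and then verify Definition~\ref{dfn:Duality} by direct substitution against the metric $h = e^{-\phi}g$.

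First I would use the standard conformal change formula. Writing $\bar g = e^{-2\phi}g$ and letting $\bar\nabla$ denote its Levi-Civita connection, one has
\[
  \bar\nabla_X Y = \nabla_X Y - d\phi(X)Y - d\phi(Y)X + g(X,Y)\nabla\phi,
\]
where $\nabla\phi$ is the $g$-gradient of $\phi$. By definition of the weighted connection with weight $-\phi$ on $(M,\bar g)$,
\[
  \nabla^{-\phi}_X Y = \bar\nabla_X Y + d\phi(X)Y + d\phi(Y)X,
\]
so the two correction terms cancel and one gets the pleasantly simple identity
\[
  \nabla^{-\phi}_X Y = \nabla_X Y + g(X,Y)\nabla\phi.
\]
This is the key computational reduction: both connections are now expressed in terms of $\nabla$, $\phi$, and $g$.

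Next I would plug into the duality identity. On the left,
\[
  D_X\!\bigl(e^{-\phi}g(Y,Z)\bigr) = e^{-\phi}\bigl[-d\phi(X)\,g(Y,Z) + g(\nabla_X Y,Z) + g(Y,\nabla_X Z)\bigr].
\]
On the right, using the formulas for $\nabla^\phi$ and $\nabla^{-\phi}$ and expanding $g(Y,\nabla\phi)=d\phi(Y)$,
\[
  h(\nabla^\phi_X Y,Z) = e^{-\phi}\bigl[g(\nabla_X Y,Z) - d\phi(X)g(Y,Z) - d\phi(Y)g(X,Z)\bigr],
\]
\[
  h(Y,\nabla^{-\phi}_X Z) = e^{-\phi}\bigl[g(Y,\nabla_X Z) + d\phi(Y)g(X,Z)\bigr].
\]
Adding these two lines, the $\pm d\phi(Y)g(X,Z)$ terms cancel and the remainder matches the left-hand side exactly.

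There is essentially no obstacle beyond bookkeeping: the only conceptual step is the observation that the conformal correction to the Levi-Civita of $e^{-2\phi}g$ precisely cancels the weighted-connection correction for weight $-\phi$, reducing the problem to the two-line metric identity above. I would present the computation in the order (conformal change) $\Rightarrow$ (formula for $\nabla^{-\phi}$) $\Rightarrow$ (verification of Definition~\ref{dfn:Duality}).
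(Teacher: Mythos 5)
Your proposal is correct and follows essentially the same route as the paper: the paper likewise uses the conformal change formula to reduce to $\nabla^{-\phi}_X Y = \nabla_X Y + g(X,Y)\nabla\phi$ and then verifies the duality identity for $e^{-\phi}g$ by the same direct expansion and cancellation.
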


\begin{proof}
  Let $\nabla$ (resp. $\tilde{\nabla}$) be the Levi-Civita connections for $g$ (resp. $e^{-2\phi}g$). Recall that
  \[
    \tilde{\nabla}_X Y = \nabla_X Y - d\phi(X)Y - d\phi(Y)X + g(X,Y)\nabla\phi
  \]
  Therefore,
  \[
    \nabla^{-\phi}_X Y = \nabla_X Y + g(X,Y)\nabla\phi
  \]
  This gives us
  \begin{align*}
    &e^{-\phi}g(\nabla^\phi_X Y, Z) + e^{-\phi}g(Y,\nabla^{-\phi}_X Z)\\
    &\qquad= e^{-\phi}\big[g(\nabla_X Y, Z) - d\phi(X) g(Y,Z) - d\phi(Y)g(X,Z)\\
    &\qquad\qquad\qquad+ g(Y,\nabla_X Z) + g(X,Z)g(Y,\nabla\phi)\big]\\
    &\qquad=e^{-\phi}\left[g(\nabla_X Y,Z) + g(Y,\nabla_X Z) - d\phi(X)g(Y,Z)\right]\\
    &\qquad=D_X\left[e^{-\phi}g(Y,Z)\right]
  \end{align*}
\end{proof}

\begin{rmk}
  From \cite[Proposition~5.30]{WYDens}, we know that $e^{-\phi}g$ is $\nabla^\phi$-Codazzi, and furthermore, the corresponding Amari-Chentsov tensor is
  \[
    D(X,Y,Z) = d\phi(X)e^{-\phi}g(Y,Z) + d\phi(Y)e^{-\phi}g(Z,X) + d\phi(Z) e^{-\phi}g(X,Y).
  \]
\end{rmk}

The following proposition is a more general version of a result that appeared in the author's work with Wylie \cite[Proposition~5.18]{WYDens}, where the analogous result was constructed specifically for the manifolds with density case.

\begin{prop}\label{prop:DualVF}
  Let $(M,g)$ be a Riemannian manifold, and $\nabla,\nabla^*$ be affine connections on $M$ dual with respect to $g$. A vector field $V$ on $M$ is $\nabla$-parallel iff $\alpha = g(V,\cdot)$ is a $\nabla^*$-parallel 1-form.
\end{prop}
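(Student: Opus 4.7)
The plan is to run a direct computation showing that the quantity $(\nabla^*_X \alpha)(Y)$ equals $g(\nabla_X V, Y)$ for all vector fields $X, Y$, after which the equivalence falls out of the non-degeneracy of $g$. Since the statement is an iff, both directions are captured by this single identity.

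Concretely, I would start from the definition $\alpha(Y) = g(V,Y)$ and expand
\[
(\nabla^*_X \alpha)(Y) = D_X\big(\alpha(Y)\big) - \alpha(\nabla^*_X Y) = D_X g(V,Y) - g(V, \nabla^*_X Y).
\]
Now I would apply the defining property of duality, Definition~\ref{dfn:Duality}, to the first term:
\[
D_X g(V, Y) = g(\nabla_X V, Y) + g(V, \nabla^*_X Y).
\]
Substituting this in causes the terms involving $\nabla^*_X Y$ to cancel, leaving the clean identity
\[
(\nabla^*_X \alpha)(Y) = g(\nabla_X V, Y).
\]

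From here the proposition is immediate. If $V$ is $\nabla$-parallel, the right side vanishes for every $X, Y$, so $\nabla^* \alpha = 0$. Conversely, if $\alpha$ is $\nabla^*$-parallel, then $g(\nabla_X V, Y) = 0$ for all $Y$, and non-degeneracy of $g$ forces $\nabla_X V = 0$ for all $X$. There is no real obstacle here beyond bookkeeping; the only mild subtlety is remembering that in Definition~\ref{dfn:Duality} the connection $\nabla^*$ acts on the second slot, so one must be careful to place $V$ in the first slot of $g$ (matching the slot position of $\nabla$) when invoking duality. This is the same content as \cite[Proposition~5.18]{WYDens}, but stripped of the specific form $\nabla^\phi$ and rephrased purely in terms of $(\nabla, \nabla^*, g)$.
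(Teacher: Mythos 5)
Your computation is correct and is essentially the paper's own proof: the paper likewise expands $(\nabla^*_X\alpha)(Y) = D_X g(V,Y) - g(V,\nabla^*_X Y)$ and invokes duality to obtain $g(\nabla_X V, Y)$, with the iff then following from non-degeneracy of $g$. You merely spell out the final non-degeneracy step and the slot-placement remark a bit more explicitly than the paper does.
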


\begin{proof}
  \[
    (\nabla^*_X\alpha)(Y) = D_X\alpha(Y) - \alpha(\nabla^*_X Y) = D_Xg(V,Y) - g(V,\nabla^*_X Y) = g(\nabla_X V, Y)
  \]
\end{proof}

We finish the discussion of duality with a simplification of one of Lemma~5.19 from \cite{WYDens}:

\begin{lem}[\cite{WYDens}]\label{lem:WY519}
  If $\nu$ is a $\nabla^\phi$-parallel distribution, then both $\nu$ and $\nu^\perp$ are integrable, where $\nu^\perp$ is the $g$-orthogonal complement of $\nu$.
\end{lem}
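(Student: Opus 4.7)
The plan is to split the claim into two halves: integrability of $\nu$, which is a general fact about parallel distributions for torsion-free connections, and integrability of $\nu^\perp$, which requires passing to the dual connection.

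First I would handle $\nu$ itself. Since $\nabla^\phi$ is manifestly torsion-free (the correction terms $-d\phi(X)Y-d\phi(Y)X$ are symmetric in $X,Y$), for any local sections $X,Y$ of $\nu$ the parallelism hypothesis gives $\nabla^\phi_X Y,\nabla^\phi_Y X \in \nu$, and therefore
\[
  [X,Y] = \nabla^\phi_X Y - \nabla^\phi_Y X \in \nu.
\]
Frobenius then yields integrability of $\nu$.

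The less obvious half is $\nu^\perp$. Here I would invoke the duality machinery developed in this section: by Proposition~\ref{prop:Duality}, $\nabla^\phi$ and $\nabla^{-\phi}$ are dual with respect to $\tilde g := e^{-\phi}g$. Since $\tilde g$ is conformal to $g$, orthogonal complements with respect to the two metrics coincide, so there is no ambiguity in writing $\nu^\perp$. Using Proposition~\ref{prop:DualPar}, if $\gamma$ is any path and $Y(0) \in \nu(\gamma(0))^\perp$, then $\nabla^{-\phi}$-parallel translation produces $Y(t)$ satisfying $\tilde g(X(t),Y(t)) = \tilde g(X(0),Y(0)) = 0$ for every $\nabla^\phi$-parallel field $X(t)$. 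As $\nu$ is $\nabla^\phi$-parallel, such $X(t)$ sweep out all of $\nu(\gamma(t))$, so $Y(t) \in \nu^\perp(\gamma(t))$. Hence $\nu^\perp$ is $\nabla^{-\phi}$-parallel.

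Since $\nabla^{-\phi}$ is again a weighted connection, it is torsion-free, and applying the same Frobenius argument as in the first step (now with $\nabla^\phi$ replaced by $\nabla^{-\phi}$ and $\nu$ replaced by $\nu^\perp$) completes the proof. I do not anticipate any genuine obstacle: the only conceptual point is recognizing that the difficulty of $\nu^\perp$ — which is not $\nabla^\phi$-parallel in general — evaporates once one works with the $\tilde g$-dual connection, and that the resulting dual connection happens to be torsion-free again, allowing us to reuse the same short Frobenius argument.
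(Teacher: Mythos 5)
Your proof is correct and follows essentially the same route as the paper: conformal invariance of the orthogonal complement, duality of $\nabla^\phi$ and $\nabla^{-\phi}$ with respect to $e^{-\phi}g$ to transfer parallelism of $\nu$ to $\nu^\perp$, and torsion-freeness of both weighted connections combined with the Frobenius bracket argument. The only cosmetic difference is that you derive the parallel-transport statement directly from Proposition~\ref{prop:DualPar}, where the paper cites Corollary~\ref{cor:DualHol}; the underlying mechanism is identical, and your version spells out the details the paper leaves implicit.
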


\begin{proof}
  Being $g$-orthogonal, means that $\nu$ and $\nu^\perp$ are $e^{-\phi}g$-orthogonal. Then, by Corollary~\ref{cor:DualHol} we know that $\nu$ being $\nabla^\phi$-parallel is equivalent to $\nu^\perp$ being $\nabla^{-\phi}$-parallel. However, both $\nabla^\phi$ and $\nabla^{-\phi}$ are torsion-free, therefore, $\nu$ and $\nu^\perp$ are integrable.
\end{proof}

\begin{rmk}
  Note that the proof in \cite{WYDens} used duality without naming it in equation (5.11).
\end{rmk}

%%% Local Variables:
%%% mode: latex
%%% TeX-master: "Paper"
%%% End:

\section{General Observations}\label{sec:General}

The goal of this section is to provide tools for constructing examples, as well as obstructions that help guide the search for new examples. These tools, especially from Subsections~\ref{sec:Opq} and \ref{sec:Struct} will play a key role in constructing examples in Section~\ref{sec:Ex}.

\subsection{Obstructions}\label{sec:Obstruct}

We begin by considering some obstructions to having certain common structures be $\nabla^\phi$-parallel. In particular, we are interested in imposing conditions on having an almost complex structure $J$ or a symplectic structure $\omega$. The existence of such $\nabla^\phi$-parallel tensors would imply $Hol^\phi$ in $SL_n(\bbC)\cdot U(1)$ and $Sp(2n,\bbR)$, respectively.

We begin with a theorem that relates conditions on a compatible triple $(g,J,\omega)$.

\begin{thm}[\cite{ZFInfo}]\label{thm:Compat}  
  Let $\nabla$ be a torsion-free connection on $M$. Let $(g,J,\omega)$ be a compatible triple of metric, almost-complex structure and symplectic form. Then, any two of the following statements imply the third:

  \begin{enumerate}
  \item $g$ is Codazzi with respect to $\nabla$

  \item $J$ is Codazzi with respect to $\nabla$

  \item $\nabla\omega = 0$
  \end{enumerate}
\end{thm}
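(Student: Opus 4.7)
The plan is to derive a single identity expressing $\nabla\omega$ in terms of $\nabla g$ and $\nabla J$, and then extract the three implications from its symmetries. Applying the Leibniz rule for $\nabla$ to the compatibility relation $\omega(Y,Z) = g(JY, Z)$ produces the key identity
\[
  (\nabla_X \omega)(Y, Z) = (\nabla_X g)(JY, Z) + g((\nabla_X J)Y, Z).
\]
Each of the three conditions is a symmetry statement about the tensors appearing above: (1) is total symmetry of $\nabla g$ (already symmetric in its last two slots, so Codazzi promotes it to totally symmetric); (2) is the symmetry of $g((\nabla_X J)Y,Z)$ in $(X,Y)$; (3) is the vanishing of the left-hand side.

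Two auxiliary identities hold automatically. Differentiating $g(JY,Z) + g(Y,JZ) = 0$ (an expression of the antisymmetry of $\omega$) and cancelling the terms without $\nabla g$ or $\nabla J$ by means of the compatibility $g(J\cdot,\cdot) = -g(\cdot,J\cdot)$ yields
\[
  (\nabla_X g)(JY, Z) + (\nabla_X g)(Y, JZ) + g((\nabla_X J)Y, Z) + g(Y, (\nabla_X J)Z) = 0.
\]
Second, because $\omega$ is symplectic, $d\omega = 0$, which for a torsion-free connection is the cyclic identity $\sum_{\mathrm{cyc}} (\nabla_X\omega)(Y,Z) = 0$.

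The implications (1)+(3) $\Rightarrow$ (2) and (2)+(3) $\Rightarrow$ (1) are the easier ones. Starting from (3), written as $g((\nabla_X J)Y, Z) = -(\nabla_X g)(JY, Z)$, and substituting $Y \mapsto JY$ together with the anti-commutation $(\nabla_X J) J = -J(\nabla_X J)$ (which follows from $J^2 = -\mathrm{id}$) yields the auxiliary relation $(\nabla_X g)(Y, Z) = -(\nabla_X g)(JY, JZ)$. Combining this relation with either the Codazzi symmetry of $\nabla g$ (for the direction (1)+(3) $\Rightarrow$ (2)) or the symmetry of $\nabla J$ in $(X,Y)$ (for (2)+(3) $\Rightarrow$ (1)) transports the symmetry across the applied $J$'s and delivers the missing condition.

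The implication (1)+(2) $\Rightarrow$ (3) is the delicate case: neither summand on the right-hand side of the key identity is manifestly antisymmetric in $(Y,Z)$, so one cannot simply read off $\nabla\omega = 0$. Instead, the cyclic identity $d\omega = 0$ combined with the strong invariance of $g((\nabla_X J)Y, Z)$ under moving $J$ between its three slots (a consequence of (2) together with $J^2 = -I$) is what should force $\nabla\omega$ to vanish. My expectation is that the main obstacle throughout is bookkeeping: since $J$ appears in only one slot of $(\nabla_X g)(JY, Z)$, the Codazzi symmetry of $\nabla g$ does not translate directly into a useful symmetry of this expression, and the $J^2 = -I$ anti-commutation is the device needed to bridge between slots decorated with $J$ and those that are not.
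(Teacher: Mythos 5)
Your key identity is correct, and the two directions that assume (3) are essentially right: from $\nabla\omega=0$ and $(\nabla_XJ)J=-J(\nabla_XJ)$ you do get $(\nabla_Xg)(Y,Z)=-(\nabla_Xg)(JY,JZ)$, and with either remaining hypothesis this transports the Codazzi symmetry across the $J$'s as you describe. (For the record, the paper itself gives no proof of this statement -- it is quoted from \cite{ZFInfo} -- so your argument stands or falls on its own.)

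The genuine gap is the direction (1)$+$(2)$\Rightarrow$(3), which you explicitly leave as an expectation, and the mechanism you bet on cannot carry it. Write $A(X,Y,Z)=(\nabla_Xg)(Y,Z)$ and $B(X,Y,Z)=g((\nabla_XJ)Y,Z)$, so your first auxiliary identity is $B(X,Y,Z)+B(X,Z,Y)=-A(X,JY,Z)-A(X,JZ,Y)$. Summing its three versions obtained by cycling which vector field occupies the first slot, and using only total symmetry of $A$ (condition (1)), one finds that $\sum_{\mathrm{cyc}}(\nabla_X\omega)(Y,Z)=0$ holds identically; thus $d\omega=0$ is already a consequence of (1) and torsion-freeness and imposes no further constraint, so it cannot ``force'' $\nabla\omega=0$. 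The step you are missing is an elimination: use those same three cyclic instances of the auxiliary identity as a linear system in the unknowns $B(X,Y,Z)=B(Y,X,Z)$, $B(Y,Z,X)=B(Z,Y,X)$, $B(Z,X,Y)=B(X,Z,Y)$ (equalities by (2)); with (1) the system solves to $g((\nabla_XJ)Y,Z)=-(\nabla_Xg)(Y,JZ)$. Feeding this into the always-valid relation $B(X,JY,Z)=B(X,Y,JZ)$ (from $J^2=-I$) gives $(\nabla_Xg)(JY,JZ)=-(\nabla_Xg)(Y,Z)$, and substituting back into your key identity yields $(\nabla_X\omega)(Y,Z)=(\nabla_Xg)(JY,Z)-(\nabla_Xg)(Y,JZ)=0$, since the last relation lets $J$ slide between the second and third slots. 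So the needed ingredients are already on your list -- the first auxiliary identity and the anticommutation -- but as proposed the hard direction is unproven, and the emphasis on $d\omega=0$ points down a dead end.
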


\begin{prop}\label{prop:Jcompat}
  Let $(M,g,\phi)$ be a manifold with density, and $J$ a $\nabla^\phi$-parallel almost complex structure. Then, either $g$ and $J$ are not compatible, or $\phi$ is constant.
\end{prop}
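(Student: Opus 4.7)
The plan is to apply Theorem~\ref{thm:Compat} to the compatible triple built from the conformally rescaled metric $e^{-\phi}g$. The reason for the rescaling is that although $g$ itself has no direct Codazzi-type relation with $\nabla^\phi$, the remark following Proposition~\ref{prop:Duality} records that $e^{-\phi}g$ \emph{is} $\nabla^\phi$-Codazzi, with explicit Amari--Chentsov tensor
\[
  D(X,Y,Z) = d\phi(X)e^{-\phi}g(Y,Z) + d\phi(Y)e^{-\phi}g(Z,X) + d\phi(Z)e^{-\phi}g(X,Y).
\]
Since the compatibility relation $g(JX,JY)=g(X,Y)$ is a pointwise algebraic condition that is invariant under conformal rescaling, the rescaled metric $e^{-\phi}g$ is also compatible with $J$, yielding a compatible triple $(e^{-\phi}g,J,\tilde\omega)$ with fundamental form $\tilde\omega(X,Y)=e^{-\phi}g(JX,Y)$.

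Next I would feed this triple into Theorem~\ref{thm:Compat}. The hypothesis $\nabla^\phi J=0$ makes $J$ trivially Codazzi with respect to $\nabla^\phi$, and the remark above gives the Codazzi property of $e^{-\phi}g$, so the theorem delivers $\nabla^\phi\tilde\omega=0$. Combining the two parallel tensors via the Leibniz rule then shows that the metric itself satisfies $e^{-\phi}g(X,Y)=\tilde\omega(X,JY)$ and is therefore $\nabla^\phi$-parallel. Comparing with the remark above forces $D\equiv 0$, and evaluating $D(X,X,X)=3\,d\phi(X)\,e^{-\phi}g(X,X)$ on any nonzero $X$ gives $d\phi=0$, so $\phi$ is constant.

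The step I expect to be most delicate is conceptual rather than computational: one must notice that $g$ should be replaced by the rescaling $e^{-\phi}g$ before invoking Theorem~\ref{thm:Compat}, since the rescaling is precisely what converts the Codazzi failure into the clean tensor $D$ from Section~\ref{sec:Duality} while leaving $J$-compatibility untouched. Once that substitution is made, everything reduces to assembling the already-recorded Amari--Chentsov formula with the symmetry theorem for compatible triples, and the contradiction with $d\phi\ne 0$ appears immediately from the pointwise positivity of $g$.
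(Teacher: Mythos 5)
Your proposal is correct and follows essentially the same route as the paper: rescale to $e^{-\phi}g$, note it stays compatible with $J$, use that $J$ parallel implies $J$ Codazzi together with the Codazzi property of $e^{-\phi}g$ to get $\nabla^\phi\tilde\omega=0$ from Theorem~\ref{thm:Compat}, recover $\nabla^\phi\left(e^{-\phi}g\right)=0$ from the parallelism of $\tilde\omega$ and $J$, and compare with the Amari--Chentsov formula to force $d\phi=0$. The only differences are cosmetic (the paper writes $e^{-\phi}g(X,Y)=\omega(-JX,Y)$ and tests $D$ with $Y=Z$ orthogonal to $X$, while you use $\tilde\omega(X,JY)$ and $D(X,X,X)$), so there is nothing to fix.
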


\begin{proof}
  Suppose that $g$ and $J$ are compatible, that is $g(JX, JY) = g(X,Y)$, then $e^{-\phi} g$ and $J$ are also compatible. From \cite[Proposition~5.30]{WYDens}, we know that $e^{-\phi} g$ is Codazzi with respect to $\nabla^\phi$. Let $\omega(X,Y) = e^{-\phi}g(JX,Y)$, then $(e^{-\phi}g,J,\omega)$ is a compatible triple, where $e^{-\phi}g$ and $J$ are both $\nabla^\phi$-Codazzi. Therefore, by Theorem~\ref{thm:Compat}, $\nabla^\phi\omega = 0$. However, $\left(e^{-\phi}g\right)(X,Y) = \omega(-JX,Y)$, so $\nabla^{\phi}e^{-\phi}g = 0$. However, explicit computation (see \cite{WYDens}) shows that
  \[
    \left(\nabla^\phi_X e^{-\phi}g\right)(Y,Z) = d\phi(X)e^{-\phi}g(Y,Z) + d\phi(Y)e^{-\phi}g(Z,X) + d\phi(Z)e^{-\phi}g(X,Y).
  \]
  If we let $Y=Z$ be orthogonal to $X$ with $|Y|=1$, then we get $e^{-\phi}d\phi(X) = 0$, and so $\phi$ is constant as claimed.
\end{proof}

The symplectic compatibility result is weaker, since $\omega$ being compatible with $e^{-\phi}g$ does not make it compatible with $g$, as is the case for $J$.

\begin{prop}\label{prop:symp}
  Let $(M^{2n},g,\phi)$ be an even dimensional manifold with density, with $n\geq 2$, and $\omega$ be a $\nabla^\phi$-parallel symplectic form. Then, either $\omega$ is not compatible with $e^{-\phi}g$, or $\phi$ is constant.
\end{prop}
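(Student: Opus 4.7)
The plan is to compare two naturally occurring $\nabla^\phi$-parallel top-degree forms on $M^{2n}$. The key point is that $\nabla^\phi$ admits, up to a scalar, a unique parallel volume form; and if $\omega$ is compatible with $\tilde g := e^{-\phi}g$, then $\omega^n/n!$ furnishes another such form. Forcing the two to be proportional pins down $\phi$.

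First I would compute the $\nabla^\phi$-parallel volume forms explicitly. From $\nabla^\phi_X Y = \nabla_X Y - d\phi(X)Y - d\phi(Y)X$ one reads off the Christoffel symbols $\tilde\Gamma^k_{ji} = \Gamma^k_{ji} - \phi_j \delta^k_i - \phi_i \delta^k_j$, whose contraction is $\tilde\Gamma^k_{jk} = \Gamma^k_{jk} - (2n+1)\phi_j$. The parallel-transport equation for a volume form $F\mu_g$ then integrates, using $\partial_j\log\sqrt{\det g} = \Gamma^k_{jk}$, to $F = C e^{-(2n+1)\phi}\sqrt{\det g}$. So every $\nabla^\phi$-parallel volume form is a constant multiple of $e^{-(2n+1)\phi}\mu_g$. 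This is essentially the calculation from \cite{WYDens} that establishes $Hol^\phi\subset SL_{2n}(\bbR)$.

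Next, compatibility of $\omega$ with $\tilde g$ means that $\omega^n/n!$ equals the Riemannian volume form of $\tilde g$ up to sign. Since $\det\tilde g = e^{-2n\phi}\det g$, this is $\pm e^{-n\phi}\mu_g$. Because $\omega$ is $\nabla^\phi$-parallel, so is $\omega^n/n!$; comparing with the previous paragraph (and assuming $M$ connected) forces $e^{-n\phi}\mu_g = C e^{-(2n+1)\phi}\mu_g$ for some constant $C$. Equivalently, $e^{(n+1)\phi}$ is constant, which yields $\phi$ constant.

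There is no serious obstacle in this argument; the hypothesis $n\geq 2$ is not strictly needed here but presumably excludes the degenerate case $\dim M = 2$ in which $\omega$ is already a volume form. Note that an attempt to mirror the proof of Proposition~\ref{prop:Jcompat} via Theorem~\ref{thm:Compat} would yield only that the compatible $J$ is $\nabla^\phi$-Codazzi (the triple $(\tilde g,J,\omega)$ has $\tilde g$ Codazzi and $\omega$ parallel, forcing $J$ Codazzi but not parallel), which is strictly weaker than $\nabla^\phi$-parallel. The volume-form comparison sidesteps this gap and explains why, as remarked in the text, the symplectic compatibility conclusion is weaker than the complex one.
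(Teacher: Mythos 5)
Your argument is correct, and it takes a genuinely different route from the paper. You compare two $\nabla^\phi$-parallel top-degree forms: the (locally unique up to a constant) parallel volume form $C e^{-(2n+1)\phi}\mu_g$, obtained from the trace $\tilde\Gamma^k_{jk}=\Gamma^k_{jk}-(2n+1)\partial_j\phi$ --- the same computation that gives $Hol^\phi\subset SL_{2n}(\bbR)$ --- and the Liouville form $\omega^n/n!=\pm e^{-n\phi}\mu_g$, which is parallel because $\omega$ is; proportionality forces $e^{(n+1)\phi}$ to be (locally) constant, hence $d\phi=0$. The paper instead runs exactly the scheme you set aside in your last paragraph: it applies Theorem~\ref{thm:Compat} to the triple $(e^{-\phi}g,J,\omega)$ to conclude only that $J$ is $\nabla^\phi$-Codazzi, and then closes the gap with a further computation, using the dual connection $\nabla^{-\phi}$ to get $h\bigl((\nabla^\phi_X J)Y,Z\bigr)+D(X,JY,Z)=0$, so that Codazzi symmetry of $J$ yields $D(X,JY,Z)=D(Y,JX,Z)$ for the Amari-Chentsov tensor $D$, and the substitution $X=Z=JY=\nabla^h\phi$ gives $3|\nabla^h\phi|^4=-|\nabla^h\phi|^4$; so that route is not a dead end, merely longer (and the remark in the text about the symplectic result being ``weaker'' refers to the hypothesis --- compatibility with $e^{-\phi}g$ rather than with $g$ --- not to the method of proof). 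What your approach buys: it is shorter, bypasses Theorem~\ref{thm:Compat} and the duality machinery of Section~\ref{sec:Duality} entirely, uses only nondegeneracy of $\omega$ together with the volume identity $\omega^n/n!=\pm\mu_{e^{-\phi}g}$ (a consequence of, and strictly weaker than, compatibility), and, as you note, it makes the hypothesis $n\geq 2$ visibly superfluous (the paper's own argument in fact also goes through for $n=1$). What the paper's approach buys is a pointwise identity locating the obstruction in the Amari-Chentsov tensor, consistent with the duality framework developed earlier in the paper. One small point of hygiene in your write-up: the constant $C$ is a priori only locally constant, but that already gives $d\phi=0$ on each chart, which is all you need.
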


\begin{proof}
  Suppose that $\omega$ is both $\nabla^\phi$-parallel, and compatible with $h=e^{-\phi}g$. Then, $\omega(X,Y) = h(JX,Y)$ for some almost-complex structure $J$. Furthermore, since $\nabla^\phi\omega = 0$, and $h$ is $\nabla^\phi$-Codazzi, Theorem~\ref{thm:Compat} tells us that $J$ is also $\nabla^\phi$-Codazzi, that is $\left(\nabla^\phi_X J\right)(Y) = \left(\nabla^\phi_Y J\right)(X)$.

  As in Proposition~\ref{prop:Duality}, we will use $\nabla^{-\phi}$ to denote the weighted connection on $(M,e^{-2\phi}g,-\phi)$, which recall is dual to $\nabla^\phi$ with respect to $h$.

  \begin{align*}
    0 &= \left(\nabla^\phi_X\omega\right)(Y,Z)\\
      &= D_X\omega(Y,Z) - \omega\left(\nabla^\phi_X Y, Z\right) - \omega\left(Y,\nabla^\phi_X,Z\right)\\
      &= D_X h(JY,Z) - h\left(J(\nabla^\phi_X Y),Z\right) - h\left(JY,\nabla^\phi_X Z\right)\\
      &= h\left(\nabla^{-\phi}_X(JY),Z\right) - h\left(J(\nabla^\phi_X Y),Z\right)\\
      &= h\left(\nabla^\phi_X(JY),Z\right) + D(X,JY,Z) - h\left(J(\nabla^\phi_X Y),Z\right)\\
      &= h\left((\nabla^\phi_X J)(Y),Z\right) + D(X,JY,Z) = 0
  \end{align*}

  Switching $X$ and $Y$, we get
  \[
    h\left((\nabla^\phi_Y J)(X),Z\right) + D(Y,JX,Z) = 0
  \]
  Now, using the fact that $J$ is $\nabla^\phi$-Codazzi, we conclude that we must have $D(X,JY,Z) = D(Y,JX,Z)$ for every $X,Y,Z$. From \cite[Proposition~5.30]{WYDens}, we know that $D(X,Y,Z) = d\phi(X)h(Y,Z) + d\phi(Y)h(Z,X) + d\phi(Z)h(X,Y)$, so writing out, we get:
  \begin{align*}
    &d\phi(X)h(JY,Z) + d\phi(JY)h(Z,X) + d\phi(Z)h(X,JY)\\
    =&d\phi(Y)h(JX, Z) + d\phi(JX)h(Z,Y) + d\phi(Z)h(Y,JX)
  \end{align*}
  Now, let $JY=X=Z = \nabla^h\phi$ (the $h$ gradient of $\phi$), then, $d\phi(Y)=d\phi(JX) = 0$ and $h(Y,JX) = h(-JY,X) = -|\nabla^h\phi|^2$, so the above expression becomes:
  \[
    3|\nabla^h\phi|^4 = -|\nabla^h\phi|^4
  \]
  so, $|\nabla^h\phi|=0$, and $\phi$ must be constant as claimed.
\end{proof}

\subsection{$O(p,q)$ Holonomy}\label{sec:Opq}

The main tools we use for constructing examples of $Hol^\phi\subset O(p,q)$ are the work in dimension 2 by Dini~\cite{Dini} and itss generalization to higher dimension by Levi-Civita~\cite{LC-Trasforma} on projectively equivalent Riemannian metrics. Recall that two connections are projectively equivalent if their geodesics are the same up to reparametrization, for us this is relevant since $\nabla^\phi$ and the Levi-Civita connection are projectively equivalent. We present here Dini's result, and a special case of Levi-Civita's result. While the result stated here is weaker than Levi-Civita's full theorem, it suffices for our purposes.

\begin{prop}[\cite{Dini}]\label{prop:Dini}
  Two Riemannian metrics $g,\tilde{g}$ on $M^2$ are projectively equivalent, iff around each point there exists a coordinate chart with coordinates $(u,v)$, such that
  \begin{align*}
    g &= \left(U(u)-V(u)\right)\left(U_1(u)du^2+V_1(v)dv^2\right)\\
    \tilde{g} &= \left(\frac{1}{V(v) - U(u)}\right)\left(\frac{U_1(u)}{U(u)}du^2 + \frac{V_1(v)}{V(v)}dv^2\right)
  \end{align*}
\end{prop}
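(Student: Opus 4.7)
The plan is to prove the two directions separately; the forward ("only if") direction is the substantial one, while the reverse is a direct check.

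For the reverse direction, I would start from the two displayed formulas for $g$ and $\tilde g$. Both are diagonal in the coordinates $(u,v)$ with each coefficient a function of a single variable, so the Christoffel symbols are easy to compute. Two torsion-free connections $\nabla$ and $\tilde\nabla$ are projectively equivalent exactly when their difference takes the form $\tilde\nabla_X Y - \nabla_X Y = \theta(X)Y + \theta(Y)X$ for a 1-form $\theta$. The plan is therefore to compute the two Levi-Civita connections in the $(u,v)$ chart and check that the tensor of differences $\tilde\Gamma^k_{ij}-\Gamma^k_{ij}$ has precisely this algebraic form, with $\theta$ the closed 1-form arising from the explicit conformal factors in the formulas. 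This is a bookkeeping exercise: the factor $(U-V)$ in $g$ and $1/(V-U)$ in $\tilde g$ are designed to produce exactly this projective relation.

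For the "only if" direction, I would build the Dini coordinates from the intrinsic pair $(g,\tilde g)$. Define the $g$-selfadjoint endomorphism $L = g^{-1}\tilde g$, or rather the normalized version $\bar L = |\det L|^{-2/3} L$ (the exponent is dictated by the fact that projective equivalence of Levi-Civita connections corresponds, via Sinjukov, to a solution of a specific linear PDE of the form $\nabla_X \bar L = X\otimes\alpha + \alpha\otimes X$ for some 1-form $\alpha$). In dimension two $\bar L$ is pointwise diagonalizable with real eigenvalues $\lambda_1,\lambda_2$, and on the open set where $\lambda_1\ne\lambda_2$ the eigendistributions are smooth, one-dimensional, and (by the symmetry of $\bar L$) $g$-orthogonal. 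The key structural consequence of the Sinjukov equation in dimension two is that $\lambda_1$ is constant along the $\lambda_2$-eigenline and vice versa; that is the core algebraic fact underlying Dini's separation.

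Given this, I would choose coordinates $(u,v)$ that straighten out the two eigendistributions, with $u$ any function increasing along the $\lambda_1$-eigenline and $v$ similarly for $\lambda_2$. In these coordinates $g$ is automatically diagonal, $\lambda_1=U(u)$ depends only on $u$, and $\lambda_2=V(v)$ depends only on $v$. The conformal factor in $g$ is fixed by re-expressing the Sinjukov equation as ODEs for the diagonal entries $g_{uu}$ and $g_{vv}$, which separate and integrate to give the $(U(u)-V(v))$ prefactor and the single-variable factors $U_1(u),V_1(v)$. Finally, $\tilde g = gL$ together with $\det L = \lambda_1\lambda_2$ and the chosen normalization produces the stated form of $\tilde g$, with the reciprocal factor $1/(V(v)-U(u))$ appearing from the $|\det L|^{-2/3}$ rescaling.

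The hard part is the Sinjukov step: showing that projective equivalence of $g,\tilde g$ is encoded in a first-order linear PDE for $\bar L$ and then extracting from this PDE, in the two-dimensional setting, the decoupled dependence of $\lambda_i$ on its own eigen-coordinate. A secondary but genuine subtlety is the locus where $\lambda_1=\lambda_2$; the statement only asserts existence of the coordinate chart around each point, but around points where the eigenvalues collide one either appeals to the fact that such collisions are generically non-open (so one obtains the coordinates on a dense open set and extends by continuity) or handles the repeated-eigenvalue case separately, where $L$ is proportional to the identity and $\tilde g$ is a constant multiple of $g$ locally, falling into a degenerate case of the formula.
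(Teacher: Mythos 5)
A point of reference first: the paper does not prove Proposition~\ref{prop:Dini} at all --- it is quoted from Dini, and the only related argument in the text is the combined proof of Propositions~\ref{prop:LC} and~\ref{prop:LC-W}, which verifies by a direct Christoffel-symbol computation that the displayed metrics are projectively equivalent (i.e.\ only the ``if'' direction, in the more general Levi-Civita form; Remark~\ref{rmk:D-LC} then recovers Dini's form as a special case). Your reverse direction is exactly that computation in spirit, so it matches the paper. Your forward direction is the standard modern proof of Dini's theorem via the Sinjukov equation (the route taken in the trajectory-equivalence literature, e.g.\ \cite{MT-TrajEquiv}): build the normalized comparison tensor, use the linear first-order PDE characterizing projective equivalence, diagonalize, and integrate in eigen-coordinates. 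That is a legitimate and genuinely different route from anything carried out in the paper, and it is the right way to get the ``only if'' half, which the paper simply outsources to Dini.

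Two caveats, one cosmetic and one substantive. Cosmetic: the tensor that satisfies the clean equation $\nabla a = \lambda\otimes g + g\otimes\lambda$ is conventionally built from $\tilde g^{-1}$, namely $a=\bigl(\det\tilde g/\det g\bigr)^{1/(n+1)}\,g\,\tilde g^{-1}g$, not your $\bar L=|\det L|^{-2/3}L$ with $L=g^{-1}\tilde g$; in dimension two Cayley--Hamilton gives $\bar L=\mathrm{tr}(A)\,\mathrm{Id}-A$, so the eigenlines are the same and your argument survives, but the PDE your $\bar L$ satisfies is not literally of the form you wrote, so you should run the argument with the standard tensor. Substantive: the step you label ``the hard part'' --- that the Sinjukov equation forces $\lambda_1$ to depend only on $u$ and $\lambda_2$ only on $v$, and that the diagonal coefficients then integrate to the $(U-V)$ prefactor with single-variable factors $U_1,V_1$ --- is asserted rather than argued, and that is where essentially all of Dini's theorem lives; a complete write-up must carry it out. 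Finally, on the collision locus $\lambda_1=\lambda_2$: ``extend by continuity'' is not viable, since in the Dini normal form $U-V$ vanishes there and $g$ degenerates, so the chart genuinely fails at such points; the honest statement is local near points where the eigenvalues are distinct (an open dense set when the metrics are non-proportional), and the proportional case cannot be put in the displayed form at all. This imprecision is already present in the quoted statement, so restrict to generic points rather than trying to repair it.
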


\begin{prop}[\cite{LC-Trasforma}]\label{prop:LC}
  Given a coordinate chart $(x_1,\ldots,x_n)$, consider two families of functions $\phi_i(x_i)$ and $A_i(x_i)$ satisfying $\phi_1<\phi_2<\cdots<\phi_n$. Let
  \[
    \Pi_i = (\phi_i-\phi_1)\cdots(\phi_i-\phi_{i-1})(\phi_{i+1}-\phi_i)\cdots(\phi_n-\phi_i),
  \]
  and
  \[
    \rho_i = \frac{1}{\phi_1\phi_2\cdots\phi_n}\cdot\frac{1}{\phi_i}.
  \]
  Then, the following metrics are projectively equivalent:
  \[
    g = \sum\Pi_iA_idx_i^2\qquad\qquad\tilde{g}=\pm\sum \rho_i\Pi_iA_idx_i^2
  \]
\end{prop}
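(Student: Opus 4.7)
The plan is to verify Levi-Civita's projective equivalence criterion for the Christoffel symbols of $g$ and $\tilde{g}$. Two torsion-free connections share the same unparametrized geodesics iff their Christoffels differ by
\[
\tilde\Gamma^k_{ij} - \Gamma^k_{ij} = \delta^k_i \psi_j + \delta^k_j \psi_i
\]
for some 1-form $\psi$, and tracing this identity over $i=k$ forces $\psi = \frac{1}{2(n+1)}\,d\log|\det\tilde g/\det g|$. In our setting both metrics are diagonal, so $\det\tilde g/\det g = \prod_i \rho_i$, which by the definition of $\rho_i$ equals $(\phi_1\cdots\phi_n)^{-(n+1)}$. The unique candidate is therefore
\[
\psi_j = -\frac{\phi_j'(x_j)}{2\phi_j(x_j)},
\]
depending on $x_j$ alone. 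The global sign $\pm$ in $\tilde g$ is irrelevant, since Christoffel symbols are insensitive to a constant rescaling of the metric by $-1$.

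The diagonal form of the metrics reduces the Christoffel computation to standard formulas: the only nonzero symbols are $\Gamma^i_{ii}$, $\Gamma^i_{ij}$ for $i\neq j$, and $\Gamma^k_{ii}$ for $k\neq i$, each expressible as a logarithmic derivative of $g_{ii}$. The identity therefore splits into four cases according to how many of $i,j,k$ coincide. When all three are distinct, both $\Gamma^k_{ij}$ and $\tilde\Gamma^k_{ij}$ vanish and the identity is trivial. The cases $i=j=k$ and $i=k\neq j$ reduce respectively to $\frac{1}{2}\partial_i \log\rho_i = 2\psi_i$ and $\frac{1}{2}\partial_j \log\rho_i = \psi_j$, both of which follow immediately from the expansion $\log\rho_i = -\log\phi_i - \sum_\ell \log\phi_\ell$.

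The real content lies in the remaining case $i=j\neq k$, where the requirement $\tilde\Gamma^k_{ii} = \Gamma^k_{ii}$ reduces, after clearing denominators, to
\[
\partial_k \rho_i \cdot g_{ii} = (\rho_k - \rho_i)\,\partial_k g_{ii}.
\]
Using $\rho_k - \rho_i = (\phi_i-\phi_k)/(\phi_i\phi_k\prod_\ell \phi_\ell)$ and $\partial_k \rho_i = -\rho_i\,\phi_k'(x_k)/\phi_k(x_k)$, this collapses to the single identity
\[
\partial_k g_{ii} = \frac{\phi_k'(x_k)}{\phi_k(x_k) - \phi_i(x_i)}\,g_{ii} \qquad (k\neq i),
\]
which I would verify directly from the product formula for $\Pi_i = \prod_{\ell<i}(\phi_i - \phi_\ell)\prod_{\ell>i}(\phi_\ell - \phi_i)$. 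Since $A_i = A_i(x_i)$ is independent of $x_k$, only the single factor of $\Pi_i$ containing $\phi_k$ contributes to $\partial_k g_{ii}$; in both sub-cases $k<i$ and $k>i$ the signs work out to produce exactly $\phi_k'/(\phi_k-\phi_i)$ times $g_{ii}$.

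The main obstacle is not conceptual but rather keeping track of signs in this last step — once the correct $\psi$ is identified, the entire computation is essentially forced. With the $i=j\neq k$ case in hand, all four cases of the projective criterion hold and the proposition follows.
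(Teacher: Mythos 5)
Your proposal is correct and follows essentially the same route as the paper: both verify Weyl's projective-equivalence criterion by computing the diagonal-metric Christoffel symbols case by case ($i=j=k$, $i=k\neq j$, $i=j\neq k$), with the crucial last case resting on the identity $\partial_k g_{ii}=\frac{\phi_k'}{\phi_k-\phi_i}g_{ii}$ for $k\neq i$. The only cosmetic difference is that you locate the 1-form $\psi_j=-\frac{\phi_j'}{2\phi_j}$ by tracing the criterion, whereas the paper posits it directly as $-d\phi$ with $\phi=\frac12\log\bigl|\prod_i\phi_i\bigr|$, since it proves Proposition~\ref{prop:LC-W} in the same computation.
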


\begin{rmk}\label{rmk:D-LC}
  We make a couple observations about these results:

  \begin{enumerate}
  \item Levi-Civita's result is actually more broad than presented here, and just like Dini's is actually an if and only if condition. See \cite{MT-TrajEquiv} for a modern exposition of the full result.

  \item While the results as originally stated are only about Riemannian metrics, they work for metrics of arbitrary signature. In the case of Levi-Civita's work, simply choose some of the $\phi_i$ to be negative. In particular, to get $(p,q)$ signature, take $p$ positive $\phi_i$'s and $q$ negative. If $q$ is odd, you also need to flip the sign on $\tilde{g}$. For the purposes of this paper, we can ignore the sign of $\tilde{g}$, since $SO^+(p,q) \cong SO^+(q,p)$.

  \item Dini's construction is a special case of Levi-Civita's, with the two being related by the following change of notation:
    \begin{align*}
      u &= x_2 & U_1 &= A_2 & U &= \phi_2\\
      v &= x_1 & V_1 &= A_1 & V &= \phi_1
    \end{align*}
  \end{enumerate}
\end{rmk}

Since Levi-Civita's result is for any dimension, it will be the one we use more frequently. The following proposition allows us to adapt Levi-Civita's work to the setting of manifolds with density:

\begin{prop}\label{prop:LC-W}
  Using the notation from Proposition~\ref{prop:LC}, let $\phi = \frac{1}{2}\log\left|\prod\phi_i\right|$, then the weighted connection of $(M,g,\phi)$ is the same as the Levi-Civita connection of $(M,\tilde{g})$.
\end{prop}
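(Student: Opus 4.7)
The plan is to exploit the fact that both $\nabla^\phi$ and the Levi-Civita connection $\widetilde{\nabla}$ of $\tilde g$ are projectively equivalent to the Levi-Civita connection $\nabla$ of $g$. Since $\nabla^\phi$ and $\nabla$ share the same geodesics (up to reparametrization, via the defining formula $\nabla^\phi_X Y = \nabla_X Y - d\phi(X)Y - d\phi(Y)X$), and since by Proposition~\ref{prop:LC} the connections $\nabla$ and $\widetilde\nabla$ are projectively equivalent, transitivity gives that $\nabla^\phi$ and $\widetilde\nabla$ are projectively equivalent torsion-free connections. Hence there exists a 1-form $\alpha$ with
\[
  \widetilde\nabla_X Y - \nabla^\phi_X Y = \alpha(X)Y + \alpha(Y)X,
\]
and the task reduces to proving $\alpha = 0$.

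Equivalently, writing $\widetilde\nabla_X Y - \nabla_X Y = \psi(X)Y + \psi(Y)X$ for a 1-form $\psi$, I need to verify $\psi = -d\phi$. The first step is the standard identification of $\psi$ by taking a trace: contracting on the upper index in Christoffel symbols yields $\widetilde\Gamma^i_{ik} - \Gamma^i_{ik} = (n+1)\psi_k$, and since for any Levi-Civita connection $\Gamma^i_{ik} = \partial_k \log\sqrt{|\det g|}$, one obtains
\[
  \psi_k = \frac{1}{2(n+1)}\,\partial_k \log\frac{|\det\tilde g|}{|\det g|}.
\]

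The second step is an explicit determinant calculation using the diagonal forms in Proposition~\ref{prop:LC}: $\det g = \prod_i \Pi_i A_i$ and $\det \tilde g = (\pm 1)^n \prod_i \rho_i\Pi_i A_i$, so the ratio collapses to $\prod_i \rho_i$ (up to sign). Plugging in $\rho_i = (\phi_1\cdots\phi_n)^{-1}\phi_i^{-1}$ gives $\prod_i \rho_i = |\phi_1\cdots\phi_n|^{-(n+1)}$, whence
\[
  \psi_k = -\frac{1}{2}\,\partial_k \log\bigl|\phi_1\cdots\phi_n\bigr| = -\partial_k \phi,
\]
which is precisely $-d\phi$. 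This matches the formula defining $\nabla^\phi$, so $\widetilde\nabla = \nabla^\phi$.

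The routine parts are the trace identity and the determinant computation; neither presents real difficulty once set up. The only subtle point is making sure that the exponent $n+1$ coming from the contraction matches the exponent $n+1$ produced by $\prod_i \rho_i$, which is exactly the combinatorial coincidence that forces the definition $\phi = \tfrac{1}{2}\log|\prod \phi_i|$ (rather than any other multiple). I would structure the write-up as: (1) state the projective equivalence and the general form of $\psi$, (2) perform the trace computation, (3) compute the determinant ratio, (4) conclude.
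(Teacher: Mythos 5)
Your argument is correct, and it is genuinely different from the paper's. The paper does not take Proposition~\ref{prop:LC} as a black box: it gives a single combined proof of Propositions~\ref{prop:LC} and~\ref{prop:LC-W} by computing all the weighted and unweighted Christoffel symbols of the diagonal metrics and checking, case by case ($i=j=k$, $i=k\neq j$, $i=j\neq k$), that $\tilde{\Gamma}_{ij}^k = \Gamma_{ij}^k - (\partial_i\phi)\delta_j^k - (\partial_j\phi)\delta_i^k$ with $\partial_i\phi = \partial_i\phi_i/(2\phi_i)$. You instead treat Levi-Civita's projective equivalence as known, invoke Weyl's characterization that two projectively equivalent torsion-free connections differ by $\psi(X)Y+\psi(Y)X$ for a 1-form $\psi$, and then pin down $\psi$ by the trace identity $\widetilde\Gamma^i_{ik}-\Gamma^i_{ik}=(n+1)\psi_k$ together with $\Gamma^i_{ik}=\partial_k\log\sqrt{|\det g|}$ and the determinant ratio $|\det\tilde g|/|\det g| = \prod_i|\rho_i| = |\phi_1\cdots\phi_n|^{-(n+1)}$, so that the factor $n+1$ cancels and $\psi=-d\phi$. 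All of these steps check out (the trace identity and the log-determinant formula are valid in arbitrary signature, and the overall sign $\pm$ on $\tilde g$ affects neither the connection nor $|\det\tilde g|$). What your route buys is brevity and an explanation of why the density must be exactly $\phi=\tfrac12\log\bigl|\prod\phi_i\bigr|$; what the paper's route buys is self-containment, since its computation simultaneously proves Proposition~\ref{prop:LC} rather than citing it, and it never needs Weyl's lemma. If you write yours up, state explicitly that you are using Proposition~\ref{prop:LC} (Levi-Civita's theorem) and the classical Weyl characterization of projective equivalence for torsion-free connections, since neither is proved independently in your argument.
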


\begin{proof}
	We provide a combined proof of Propositions~\ref{prop:LC} and~\ref{prop:LC-W}.
	
	Let $A_i,\phi_i,\Pi_i,\rho_i,g,\tilde{g}$ be as above.
	
	We start by making a few observations:
	
	\begin{align*}
		g_{ij} &= \begin{cases}
			\Pi_i A_i & i=j\\
			0 & i\neq j
		\end{cases}\\
		\tilde{g}_{ij} &= \begin{cases}
			\rho_i\Pi_i A_i = \rho_ig_{ii} & i = j\\
			0 & i\neq j
		\end{cases}\\
		\partial_l g_{ii} &= \begin{cases}
			\frac{\left(\partial_l\phi_l\right)\Pi_i A_i}{\phi_l-\phi_i} & l\neq i\\
			\left(\partial_i A_i\right) \Pi_i + \sum_{k\neq i} \frac{\left(\partial_i\phi_i\right)\Pi_i A_i}{\phi_i-\phi_k} & l=i
		\end{cases}\\
		\partial_l \tilde{g}_{ii} &= \begin{cases}
			\rho_i\partial_l g_{ii} - \frac{\partial_l\phi_l}{\phi_l}\rho_ig_{ii} & l\neq i\\
			\rho_i\partial_i g_{ii} - \frac{2\partial_i\phi_i}{\phi_i}\rho_i g_{ii} & l=i
		\end{cases}
	\end{align*}
	
	Let
	\[\phi = \frac{1}{2}\log\left|\prod\phi_i\right| = \frac{1}{2}\sum\log|\phi_i|\]
	as in Proposition~\ref{prop:LC-W}
	
	Then, our goal is to show that the Christoffel symbols $\Gamma_{ij}^k$ of $g$ and $\tilde{\Gamma}_{ij}^k$ of $\tilde{g}$ are related by
	\[
	\tilde{\Gamma}_{ij}^k = \Gamma_{ij}^k - \left(\partial_i\phi\right)\delta_j^k - \left(\partial_j\phi\right)\delta_i^k
	\]
	that is, the Levi-Civita connection of $(M,\tilde{g})$ is equal to $\nabla^\phi$ on $(M,g,\phi)$. To achieve this, observe that
	\[
	\partial_i\phi = \frac{\partial_i\phi_i}{2\phi_i}
	\]
	
	First, observe that since $g,\tilde{g}$ are diagonal, if $i,j,k$ are all distinct, then $\Gamma_{ij}^k = \tilde{\Gamma}_{ij}^k = 0$. So, we only need to consider the cases when two or more of them are the same. We can break it up into 3 cases $i=j=k$, $i=j\neq k$ and $i=k\neq j$ (the $i\neq j=k$ is identical to the last one since the connections are torsion-free, and so need not be considered separately).
	
	First consider the case $i=j=k$. In this case:
	\begin{align*}
		\tilde{\Gamma}_{ii}^i &= \frac{1}{2}\tilde{g}^{ii}\partial_i\tilde{g}_{ii}\\
		&= \frac{1}{2}\cdot\frac{1}{\rho_i}g^{ii}\left(\rho_i\partial_ig_{ii} - 2\frac{\partial_i\phi_i}{\phi_i}\rho_ig_{ii}\right)\\
		&= \frac{1}{2}g^{ii}g\partial_i g_{ii} - \frac{\partial_i\phi_i}{\phi_i}\\
		&= \Gamma_{ii}^i - \left(\partial_i\phi\right) \delta_i^i - \left(\partial_i\phi\right)\delta_i^i
	\end{align*}
	
	Next, consider $i=k\neq j$. In this case:
	\begin{align*}
		\tilde{\Gamma}_{ij}^i &= \frac{1}{2}\tilde{g}^{ii}\partial_j\tilde{g}_{ii}\\
		&= \frac{1}{2}\cdot\frac{1}{\rho_i} g^{ii}\left(\rho_i\partial_j g_{ii} - \frac{\partial_j\phi_j}{\phi_j}\rho_ig_{ii}\right)\\
		&= \frac{1}{2} g^{ii}\partial_j g_{ii} - \frac{\partial_j\phi_j}{2\phi_j}\\
		&= \Gamma_{ij}^i - \left(\partial_i\phi\right) \delta_j^i - \left(\partial_j\phi\right)\delta_i^i
	\end{align*}
	
	Finally, consider $i=j\neq k$. We get:
	\begin{align*}
		\tilde{\Gamma}_{ii}^k &= \frac{-1}{2}\tilde{g}^{kk}\partial_k\tilde{g}_{ii}\\
		&= \frac{-1}{2}\cdot\frac{1}{\rho_k} g^{kk} \left(\rho_i\partial_k g_{ii} - \frac{\partial_k\phi_k}{\phi_k}\rho_i g_{ii}\right)\\
		&= \frac{-1}{2}g^{kk} \left(\frac{\phi_k}{\phi_i}\partial_k g_{ii} - \frac{\partial_k\phi_k}{\phi_i}g_{ii}\right)\\
		&= \frac{-1}{2}g^{kk}\left(\frac{\phi_k}{\phi_i}\cdot\frac{\left(\partial_k\phi_k\right)\Pi_i A_i}{\phi_k-\phi_i} - \frac{\partial_k\phi_k}{\phi_i}\Pi_i A_i\right)\\
		&= \frac{-1}{2}g^{kk} \frac{\left(\partial_k\phi_k\right)\Pi_i A_i}{\phi_k-\phi_i}\\
		&= \frac{-1}{2} g^{kk}\partial_k g_{ii}\\
		&= \Gamma_{ii}^k - \left(\partial_i\phi\right) \delta_i^k - \left(\partial_i\phi\right)\delta_i^k
	\end{align*}
\end{proof}

\subsection{Structural Result}\label{sec:Struct}

A useful tool in building high dimensional examples is the following result that describes how $\nabla^\phi$-holonomy interacts with totally geodesic submanifolds.

\begin{thm}\label{thm:totgeod}
  Let $(N,g,\phi)$ be a manifold with density. Let $M\subset N$ be a totally geodesic submanifold. Let $p\in M$, and let $\sigma:[0,T]\to M$ be a (piecwise-smooth) loop in $M$ based at $p$. Then,
  \[
    P_\sigma^{N,\phi} = \begin{pmatrix}
      P_\sigma^{M,\phi\mid_M} & X_\sigma\\
      0 & \left.P_\sigma^{N,g}\right|_{T_p^\perp M}
    \end{pmatrix}
  \]

  where $P_\sigma$ denotes parallel translation along $\sigma$, and the superscript denotes the manifold on which this parallel transport is taken, and the type of parallel transport (metric or density).

  The block $X_\sigma:T_p^\perp M\to T_p M$ is given by $X_\sigma(\vec{n}) = U(T)$, where $U$ is a vector field tangent to $M$ along $\sigma$, satisfying:
  \begin{align*}
    \nabla^{M,\phi|_M}_{\dot{\sigma}} U(t) &= e^{\phi(\sigma(t))-\phi(p)}d\phi(\vec{n}(t))\dot\sigma & U(0) &= 0\\
    \nabla_{\dot{\sigma}}^{N,g} \vec{n}(t) &= 0 &     \vec{n}(0) &= \vec{n}
  \end{align*}
\end{thm}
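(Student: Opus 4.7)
The plan is to decompose a general initial vector $v = v^T + v^\perp \in T_pM \oplus T_p^\perp M$ and analyse the $\nabla^{N,\phi}$-parallel field $U(t)$ along $\sigma$ with $U(0)=v$ by splitting $U(t)=U^T(t)+U^\perp(t)$ along the subbundles $TM$ and $T^\perp M$ over $\sigma$. The two block columns of the asserted matrix correspond to the cases $v^\perp=0$ and $v^T=0$, so it suffices to identify $U(T)$ in each of these cases.

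The essential input is that $M$ being totally geodesic is equivalent to $\nabla^N$-parallel transport along curves in $M$ preserving both $TM$ and $T^\perp M$; concretely, for $X$ tangent to $M$ the operator $\nabla^N_X$ sends tangent fields to tangent ones and normal fields to normal ones. Substituting this into
\[
\nabla^{N,\phi}_{\dot\sigma}U \;=\; \nabla^N_{\dot\sigma}U - d\phi(\dot\sigma)\,U - d\phi(U)\,\dot\sigma \;=\; 0
\]
and separating tangential from normal (using $\dot\sigma\in TM$) yields the decoupled system
\begin{align*}
\nabla^N_{\dot\sigma} U^\perp &= d\phi(\dot\sigma)\,U^\perp,\\
\nabla^{M,\phi|_M}_{\dot\sigma} U^T &= d\phi(U^\perp)\,\dot\sigma,
\end{align*}
where in the tangential equation I have used that $\nabla^N$ applied to a pair of tangent-to-$M$ arguments coincides with the intrinsic $\nabla^M$, and that $d\phi$ restricted to $TM$ is $d(\phi|_M)$.

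The normal ODE is a scalar-times-parallel equation: writing $U^\perp(t)=f(t)\vec n(t)$ with $\vec n$ the $\nabla^N$-parallel extension of $v^\perp$ (which stays normal by the totally-geodesic hypothesis), one finds $f' = \tfrac{d}{dt}\phi(\sigma(t))\,f$, and hence $f(t)=e^{\phi(\sigma(t))-\phi(p)}$ when $f(0)=1$. Because $\sigma$ is a loop the exponential factor equals $1$ at $t=T$, which gives the stated $P_\sigma^{N,g}|_{T_p^\perp M}$ on the bottom-right and the vanishing bottom-left block. Feeding this explicit $U^\perp(t)$ into the tangential equation reproduces exactly the inhomogeneous ODE from the theorem statement, and by linearity in the initial data its solution splits into $P_\sigma^{M,\phi|_M}(v^T)$ (the homogeneous piece, with $U^T(0)=v^T$ and no normal forcing) plus $X_\sigma(v^\perp)$ (the particular solution, starting from $U^T(0)=0$).

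The main obstacle I anticipate is the preservation of the tangential/normal splitting under $\nabla^N_{\dot\sigma}$; this is where the totally-geodesic hypothesis is used twice, once via the vanishing second fundamental form (to keep $\nabla^N_{\dot\sigma}U^T$ tangent) and once, dually, via the vanishing Weingarten map (to keep $\nabla^N_{\dot\sigma}U^\perp$ normal). Once these are in place, the identification of the reduced tangential operator with $\nabla^{M,\phi|_M}$ is immediate from the definition of the weighted connection, and the block structure falls out directly.
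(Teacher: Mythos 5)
Your proposal is correct and is essentially the paper's argument: the same tangential/normal splitting, the same use of the totally geodesic hypothesis to keep $\nabla^{N,g}_{\dot\sigma}$ block-preserving, the same identification of the tangential operator with $\nabla^{M,\phi|_M}$, and the same conformal factor $e^{\phi(\sigma(t))-\phi(p)}$ on the normal part, which equals $1$ at $t=T$ because $\sigma$ is a loop. The only difference is organizational: you project the weighted parallel-transport equation to derive the triangular ODE system and then split by linearity, whereas the paper exhibits the parallel fields directly (intrinsically parallel tangent fields, and $U(t)+e^{\phi(\sigma(t))-\phi(p)}\vec{n}(t)$ for normal initial data) and verifies them.
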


\begin{rmk}
  Of special interest to us in this paper will be the case where $\nabla \phi\in T_p M$ for all $p\in M$. In that case, it is easy to see that $X_\sigma=0$, since $d\phi(\vec{n}(t))=0$ along $M$.
\end{rmk}

\begin{proof}
  We prove this by considering two cases. One where our parallel vector field starts off tangent to $M$, and the other where our parallel vector field starts off orthogonal to $M$.

  Let $Y(t)$ be a $\nabla^{M,\phi\mid_M}$-parallel vector field along $\sigma(t)$, then
  \begin{align*}
    \nabla^{N,\phi}_{\dot\sigma} Y(t) &= \nabla^{N,g}_{\dot\sigma} Y(t) - d\phi(\dot\sigma)Y(t) - d\phi(Y(t))\dot\sigma\\
                                      &= \nabla^{M,g\mid_M}_{\dot\sigma} Y(t) - d\phi(\dot\sigma)Y(t) - d\phi(Y(t))\dot\sigma\\
                                      &= \nabla^{M,\phi\mid_M}_{\dot\sigma} Y(t) = 0,
  \end{align*}
  so $Y(t)$ is $\nabla^{N,\phi}$-parallel.

  Pick $\vec{n}\in T_p^\perp M$, and let $U(t)$ and $\vec{n}(t)$ be as in the statement of the theorem. Furthermore, let $\lambda(t) = e^{\phi(\sigma(t))-\phi(p)}$. We claim that $U(t) + \lambda(t)\vec{n}(t)$ is $\nabla^{N,\phi}$-parallel along $\sigma$.
  \begin{align*}
    \nabla^{N,\phi}_{\dot\sigma} \left( U(t) + \lambda(t)\vec{n}(t)\right) &= \nabla^{M,\phi|_M}_{\dot\sigma} U(t) + \lambda'(t)\vec{n}(t) + \lambda(t)\nabla^{N,g}_{\dot\sigma}\vec{n}(t)\\
    &\qquad\qquad\qquad- \lambda(t)d\phi(\dot\sigma)\vec{n}(t) - \lambda(t)d\phi(\vec{n}(t))\dot\sigma\\
                                                                           &=\nabla^{M,\phi\mid_M}_{\dot\sigma} U(t) - \lambda(t)d\phi(\vec{n}(t))\dot\sigma\\
    &= 0.
  \end{align*}
  Now, observe that $\lambda(T) = e^{\phi(p)-\phi(p)} = 1$, so starting with $\vec{n}$ we end at $X_\sigma(\vec{n}) + \vec{n}$, which completes the proof.
\end{proof}

This result is closely resembles \cite[Proposition 5.6]{WYDens}. However, it differs in that our assumptions are now local, and we also restrict the loop, whereas the referenced result makes a global structural assumption about the manifold, but allows for an arbitrary loop.

%%% Local Variables:
%%% mode: latex
%%% TeX-master: "Paper"
%%% End:

\section{Specific Examples}\label{sec:Ex}

In this section we provide specific examples of possible holonomy groups for manifolds with density. One goal of this is to show how different the $\nabla^\phi$ holonomy groups are from what can occur in the Riemannian case.

\subsection{Two-Dimensional Examples}\label{sec:2d}

We begin with the simplest case of $Hol^\phi$, namely the 2-dimensional setting. We prove the following:

\begin{repthm}{thm:2d}
  Every connected subgroup of $SL_2(\bbR)$ can be realized as the $\nabla^\phi$-holonomy of some $(M^2,g,\phi)$.
\end{repthm}

\begin{proof}
  Up to conjugation, $SL_2(\bbR)$ has 6 connected subgroups.

  \begin{enumerate}
  \item $SL_2(\bbR)$, we construct this example in Example~\ref{ex:SL2}

  \item The unique 2-dimensional connected subgroup, one of whose representations is
    \[
      \left\{\left.
          \begin{pmatrix}
            r & x\\
            0 & {1/r}
          \end{pmatrix}
        \right|
        x\in\bbR,\ r\in\bbR^+
      \right\}
    \]
    we construct such a holonomy group in Example~\ref{ex:2d}

  \item The one-dimensional connected subgroup, of the form
    \[
      \left\{\left.
          \begin{pmatrix}
            1 & x\\
            0 & 1
          \end{pmatrix}
        \right|
        x\in\bbR
      \right\}
    \]
    we refer to this subgroup as $\bbR$, and construct an example of it in Example~\ref{ex:R}

  \item $SO^+(1,1)$ we construct an example with such holonomy in Example~\ref{ex:SO+}

  \item $SO(2)$ occurs as a Riemannian holonomy, and as such occurs as a weighted holonomy with $\phi\equiv C$. (e.g. $(S^2,g_{round},0)$)
    
  \item $\{I\}$ occurs as a Riemannian holonomy, and as such occurs as a weighted holonomy with $\phi\equiv C$. (e.g. $(\bbR^2,g_{Eucl},0)$)
  \end{enumerate}
\end{proof}

\begin{ex}\label{ex:SL2}\cite[Example~5.10]{WYDens}
  Consider the round 2-sphere with density $(S^2,dr^2+\sin^2 r, \cos r)$. We consider two families of loops (each corresponding to a rectangle in local coordinates) on this sphere:
  
  $\alpha^s(t)$ with $s\in(0,\xi-\pi/2)$, corresponding to the path $(\pi/2,0) \to (\pi/2+s,0) \to (\pi/2+s,2\pi) \to (\pi/2,2\pi) \to (\pi/2,0)$. This is just the path traced out by starting at a point along the equator, moving up or down along a meridian, transiting once along the parallel, returning to the starting point along the meridian, and closing the loop along the equator. The closing along the equator makes it so that in the limit $s\to 0$ $\alpha^s(t)$ is a loop along the equator travelling once in one direction, and once in reverse, which makes the corresponding holonomy element the identity. (Note that returning along the equator is a small deviation from the the example in \cite{WYDens}, chosen for more explicit computation).

  Take parallel translation along $\alpha^s$, and differentiate it with respect to $s$ at $s=0$. Then, one obtains:
  \[
    \begin{pmatrix}
      2\pi^2 & -2\pi\\
      4\pi + \frac{8\pi^3}{3} & -2\pi^2
    \end{pmatrix} \in \mathfrak{hol}^\phi
  \]

  $\beta^s(t)$ with $s\in\bbR$, corresponding to the path $(\pi/2,0) \to (\xi,0) \to (\xi,s) \to (\pi/2,s) \to (\pi/2,0)$. This path travels from the equator along a meridian to a specific latitude, travels an arbitrary distance along that latitude, comes back to the equator and returns along the equator. Just as with $\alpha^0$, $\beta^0(t)$ is a loop with trivial holonomy, since it just goes up and down along the same path.

  The number $\xi = \cos^{-1}\left(\frac{1-\sqrt{5}}{2}\right)$ comes from specifics of parallel transport along lines of latitude along the sphere, in particular, there are three different possible behaviors base on the value of $r$: $r\in(\pi/2,\xi),\ r\in\{\pi/2,\xi\},\ r\not\in[\pi/2,\xi]$.

  Take parallel translation along $\beta^s$, and differentiate it with respect to $s$ at $s=0$. Then, one obtains:
  \[
    \begin{pmatrix}
      0 & \frac{1-\sqrt{5}}{2}e^{\frac{\sqrt{5}-1}{2}}\\
      1 & 0
    \end{pmatrix} \in \mathfrak{hol}^\phi
  \]

  One can check that the Lie algebra spanned by the two elements of $\mathfrak{hol}^\phi$ is $\mathfrak{sl}_2(\bbR)$, so the holonomy group must be $SL_2(\bbR)$.
\end{ex}

\begin{ex}\label{ex:2d}
  The next example we consider is the unique connected 2-dimensional subgroup of $SL_2(\bbR)$, consisting of matrices of the form
  \[
    \begin{pmatrix}
      r & x\\
      0 & 1/r
    \end{pmatrix}.
  \]
  Consider the manifold $(\bbR^2,dx^2+e^{2xy}dy^2,xy)$. Consider a loop $(a(t),b(t))$, and a $\nabla^\phi$-parallel vector field along it $u(t)\partial_x + v(t)\partial_y$. Then, the parallel transport ODEs are:
  \begin{align*}
    u'(t) &= \left(2b(t)a'(t)+a(t)b'(t)\right)u(t) + \left(a(t)a'(t)+b(t)b'(t)e^{2a(t)b(t)}\right)v(t)\\
    v'(t) &= a(t)b'(t)v(t)
  \end{align*}
  It is clear that $v(t)$ does not depend on $u(0)$, so the holonomy lies inside the desired group. It remains to show that the holonomy is 2-dimensional. For this, note that the Lie algebra $\fg$ corresponding to this Lie group has the property that $\exp:\fg\to G$ is bijective. So, in particular, we have an inverse:
  \[
    \exp^{-1}\begin{pmatrix}
      r & x\\
      0 & 1/r
    \end{pmatrix} = \begin{pmatrix}
      \log(r) & \frac{2x\log(r)}{r-1/r}\\
      0 & -\log(r)
    \end{pmatrix}
  \]
  This means that if we can find two elements of $Hol^\phi$, whose $\exp^{-1}$ images are linearly independent, we will have shown that the Lie group is as claimed.

  We consider two ``rectangular'' loops $(0,0)\to(1,0)\to(1,1)\to(0,1)\to(0,0)$ and $(0,0)\to(2,0)\to(2,1/2)\to(0,1/2)\to(0,0)$.

  The first one gives us
  \[
    \begin{pmatrix}
      e^{-1} & \frac{3-e^2}{2e}\\
      0 & e
    \end{pmatrix}\in Hol^\phi
    \qquad\Rightarrow\qquad
    \begin{pmatrix}
      -1 & \frac{3-e^2}{e^2-1}\\
      0 & 1
    \end{pmatrix}\in\mathfrak{hol}^\phi
  \]

  The second one gives us
  \[
    \begin{pmatrix}
      e^{-1} & \frac{81-17e^2}{16e}\\
      0 & e
    \end{pmatrix}\in Hol^\phi
    \qquad\Rightarrow\qquad
    \begin{pmatrix}
      -1 & \frac{81-17e^2}{8(e^2-1)}\\
      0 & 1
    \end{pmatrix}\in\mathfrak{hol}^\phi
  \]

  Since the two Lie algebra elements are clearly linearly independent, we have established that the holonomy group is as claimed.
\end{ex}

\begin{ex}\label{ex:R}
  Our next example is the one-dimensional subgroup
  \[
    \left\{\left.
        \begin{pmatrix}
          1 & x\\
          0 & 1
        \end{pmatrix}
      \right| x\in\bbR\right\}
  \]

  % To construct this example, consider the standard Euclidean 2-space with a linear density function (we will use $y$): $(\bbR^2,g_{Eucl}=dx^2+dy^2,y)$. Consider a loop $\sigma(t)=(a(t),b(t))$, and a $\nabla^\phi$-parallel vector field along it $u(t)\partial_x + v(t)\partial_y$. Then, the parallel transport ODEs are
  % \begin{align*}
  %   u'(t) &= b'(t)u(t) + a'(t)v(t)\\
  %   v'(t) &= 2b'(t)v(t)
  % \end{align*}
  % Which give us the matrix
  % \[
  %   \begin{pmatrix}
  %     1 & \Omega(\sigma)\\
  %     0 & 1
  %   \end{pmatrix},
  % \]
  % where $\Omega(\sigma)$ is a quantity that depends on the loop $\sigma$.

  % It remains to show that $\Omega(\sigma)$ is not identically 0. Which follows from the fact that
  % \[
  %   \Ric^\phi = dy^2,
  % \]
  % so the connection is not flat, and the holonomy can not be trivial.

  To construct this example, let $M = \bbR^2$, $g = e^xdx^2 + e^{2x+y}dy^2$, and $\phi = x+y$. Note, there are simpler examples (e.g. \cite[Example~5.7]{WYDens}), but this example provides a useful generalization to higher dimension, as we will see in Example~\ref{ex:Heis}. Consider a loop $\sigma(t)=(a(t),b(t))$, and a $\nabla^\phi$-parallel vector field along it $u(t)\partial_x + v(t)\partial_y$. Then, the parallel transport ODEs are
  \begin{align*}
    u'(t) &= \left(\frac{3}{2}a'(t)+b'(t)\right) u(t) + \left(a'(t)+e^{a(t)+b(t)}b'(t)\right) v(t)\\
    v'(t) &= \frac{3}{2}b'(t)v(t)
  \end{align*}
  Which give us the matrix
  \[
    \begin{pmatrix}
      1 & \Omega(\sigma)\\
      0 & 1
    \end{pmatrix},
  \]
  where $\Omega(\sigma)$ is a quantity that depends on the loop $\sigma$.

  It remains to show that $\Omega(\sigma)$ is not identically 0. Which follows from the fact that
  \[
    \Ric^\phi = \frac{1+e^{x+y}}{2}dy^2,
  \]
  so the connection is not flat, and the holonomy cannot be trivial.
\end{ex}

\begin{ex}\label{ex:SO+}
  The last two dimensional example is $SO^+(1,1)$. Here we use the work of Dini~\cite{Dini} and Levi-Civita~\cite{LC-Trasforma} we discussed in Section~\ref{sec:Opq}. Consider the example $M=\bbR^2$, $g= (3+\cos y)(dx^2+dy^2)$, and $\phi = \frac{1}{2}\log(2+\cos y)$. As stated before, the results in Section~\ref{sec:Opq} tell us that the resulting $\nabla^\phi$ is identical to the Levi-Civita connection on $(M,\tilde{g})$, where
  \[
    \tilde{g} = \frac{3+\cos y}{2+\cos y}dx^2 - \frac{3+\cos y}{(2+\cos y)^2}dy^2
  \]
  So, the holonomy lies inside $SO^+(1,1)$. Furthermore, at the origin, the Ricci curvature $\Ric^\phi = \frac{1}{8}dx^2 - \frac{1}{24}dy^2$, so the holonomy is non-tivial, and must be precisely $SO^+(1,1)$.
\end{ex}

\subsection{Higher-Dimensional Examples}\label{sec:hd}

In this section we discuss some higher dimensional examples, starting with dimension 3. In particular, we prove:

\begin{repthm}{thm:3d}
  Every 3-dimensional candidate given by Merkulov and Schwachh\"offer that lies inside $SL_3(\bbR)$ can arise as $Hol^\phi$.
\end{repthm}

According to the work of Merkulov and Schwachh\"ofer~\cite{MeSch}, there are 3 possible irreducible holonomy groups in 3 dimensions that lie inside $SL_3(\bbR)$: $SO(3),\ SO^+(2,1)$ and $SL_3(\bbR)$. We will show that all of these can occur as $\nabla^\phi$ holonomy groups. $SO(3)$ since it occurs as Riemmanian holonomy, and the other two because the full families $SO^+(p,q)$ and $SL_n(\bbR)$ occur.

We begin by providing the full details of a construction that was outlined in the author's paper with Wylie \cite{WYDens}, of how to obtain $Hol^\phi = SL_n(\bbR)$. Recall that this is the largest that a holonomy of an oriented manifold with density can possibly be, and as such can be viewed as a generic case.

\begin{lem}\label{lem:SkSn}
  Consider a round $S^n$, and a totally geodesic $S^k\subset S^n$. Let $\sigma:[0,T]\to S^k$ be a loop based at $p\in S^k$, then $\left.P_\sigma\right|_{T_p^\perp S^k} = I$.
\end{lem}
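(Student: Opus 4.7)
The plan is to use the ambient embedding $S^n \subset \bbR^{n+1}$ to make the normal bundle of $S^k$ in $S^n$ visibly trivial and the normal parallel transport visibly the identity. Specifically, I would first recall the classification of totally geodesic submanifolds of the round sphere: every totally geodesic $S^k \subset S^n$ arises as $S^n \cap V$ for some $(k+1)$-dimensional linear subspace $V \subset \bbR^{n+1}$.

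Next I would set up the identification of the normal bundle. Let $W = V^\perp \subset \bbR^{n+1}$ (of dimension $n-k$). For every $q \in S^k$ we have $q \in V$, so any $w \in W$ satisfies $w \perp q$ and hence $w \in T_q S^n$ under the standard identification $T_q S^n = q^\perp \subset \bbR^{n+1}$. Moreover, $T_q S^k \subset V$, so $w$ is $g$-orthogonal to $T_q S^k$ inside $T_q S^n$. This gives a canonical injection $W \hookrightarrow T_q^\perp S^k$, which is an isomorphism by dimension count. Under this identification, any vector $\vec n \in T_p^\perp S^k$ corresponds to a fixed element $w \in W$.

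The key computation is then immediate: the Levi-Civita connection of the round $S^n$ is obtained from the flat ambient connection by orthogonal projection onto the tangent bundle, so for a curve $\sigma$ in $S^n$ and any smooth $X:[0,T]\to\bbR^{n+1}$ with $X(t) \in T_{\sigma(t)} S^n$, one has $\nabla_{\dot\sigma} X(t) = \bigl(\tfrac{d}{dt}X(t)\bigr)^T$. Applying this to the constant extension $X(t) \equiv w$ (which lies in $T_{\sigma(t)} S^n$ for all $t$, by the identification above), we get $\nabla_{\dot\sigma} X = 0$, so $X$ is Levi-Civita parallel along $\sigma$. Since $\sigma(T) = \sigma(0) = p$ and $X$ is literally constant, the parallel transport sends $\vec n$ to itself.

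There is no real obstacle here; the content is entirely in choosing the right trivialization. The only thing to be careful about is that the identification $W \cong T_q^\perp S^k$ genuinely lands in the normal bundle (not just the tangent bundle of $S^n$), which is why I separate the two observations $W \subset T_q S^n$ and $W \perp T_q S^k$ above. Combining the two shows $P_\sigma$ fixes every element of $T_p^\perp S^k$, hence $P_\sigma|_{T_p^\perp S^k} = I$.
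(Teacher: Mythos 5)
Your proof is correct, but it takes a different route from the paper. The paper handles codimension one first ($k=n-1$), where the normal bundle is a line bundle, arguing that orientability of both spheres plus the fact that parallel transport preserves the tangent space of a totally geodesic hypersurface forces the unit normal to return to itself; it then iterates along a chain of totally geodesic embeddings $S^k\subset S^{k+1}\subset\cdots\subset S^n$ to reduce the general codimension to a composition of codimension-one cases. You instead work extrinsically: writing the totally geodesic $S^k$ as $S^n\cap V$ and identifying the normal bundle of $S^k$ in $S^n$ with the fixed subspace $W=V^\perp\subset\bbR^{n+1}$, then using the Gauss-formula description $\nabla_{\dot\sigma}X=\bigl(\tfrac{d}{dt}X\bigr)^T$ to see that constant vector fields valued in $W$ are parallel along any curve in $S^k$. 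Both arguments rest on essentially the same structural fact (great subspheres; the paper's chain of embeddings also comes from this), but yours is a single computation that avoids the orientability/sign discussion and the induction, and it actually proves more: the normal parallel transport along \emph{any} curve in $S^k$, not just a loop, is the canonical identification via $W$, so the normal connection is globally trivialized by a parallel frame. The paper's inductive argument, on the other hand, is purely intrinsic and would transfer to other situations where one has a chain of orientable totally geodesic hypersurfaces without an ambient flat embedding. Either proof suffices for the application in Proposition~\ref{prop:sln}, where $P_\sigma$ is the metric ($g$-)parallel transport appearing in the lower-right block of Theorem~\ref{thm:totgeod}.
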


\begin{proof}
  This result is trivial when $k=n-1$, since both manifolds are orientable. Now, consider the chain of totally-geodesic embeddings $S^k\subset S^{k+1}\subset\cdots\subset S^{n-1}\subset S^n$. Since $P_\sigma$ is trivial on each $T_p^{\perp}S^l\subset T_p S^{l+1}$, it is trivial on $T_p^\perp S^k\subset T_p S^n$.
\end{proof}

\begin{prop}\label{prop:sln}
  The weighted holonomy group of $(S^n,dr^2 + \sin^2 r g_{S^{n-1}},\cos r)$ is $SL_n(\bbR)$.
\end{prop}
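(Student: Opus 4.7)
The plan is to bootstrap Example~\ref{ex:SL2} via the totally geodesic submanifold structure of Theorem~\ref{thm:totgeod}. Fix a point $p$ on the equator of $S^n$ and let $N$ be a pole. For each unit vector $v\in T_p S^{n-1}$ (the equator), the great 2-sphere $S^2_v\subset S^n$ cut out by the 3-plane $\mathrm{span}(p,N,v)\subset\bbR^{n+1}$ is totally geodesic, and the restricted triple $(S^2_v,g|_{S^2_v},\phi|_{S^2_v})$ is isomorphic as a manifold with density to the system of Example~\ref{ex:SL2}; hence its intrinsic weighted holonomy at $p$ is $SL_2(\bbR)$.

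To lift these pieces into $T_p S^n$, observe that on $S^2_v$ the gradient $\nabla\phi = -\sin r\,\partial_r$ points along the meridian from $N$, which lies in $S^2_v$, so $\nabla\phi$ is tangent to $S^2_v$ everywhere on $S^2_v$. The remark following Theorem~\ref{thm:totgeod} then yields $X_\sigma=0$ for any loop $\sigma\subset S^2_v$ based at $p$, and Lemma~\ref{lem:SkSn} gives that the Riemannian parallel transport along $\sigma$ acts as the identity on $T_p^\perp S^2_v$. Consequently every $\nabla^\phi$-holonomy element of $S^n$ arising from a loop in $S^2_v$ acts as an $SL_2(\bbR)$ element on $T_p S^2_v = \mathrm{span}(\partial_r,v)$ and as the identity on the orthogonal complement.

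Finally, fix an orthonormal basis $e_0=\partial_r, e_1,\dots, e_{n-1}$ of $T_p S^n$ with $e_i\in T_p S^{n-1}$ for $i\geq 1$. Applying the previous step with $v=e_i$ embeds a full $\mathfrak{sl}_2$ copy into $\mathfrak{hol}^\phi_p$ supported on $\mathrm{span}(e_0,e_i)$; in particular $\mathfrak{hol}^\phi_p$ contains the elementary matrices $E_{0i}$ and $E_{i0}$ together with $E_{00}-E_{ii}$ for every $i\in\{1,\dots,n-1\}$. The brackets $[E_{0i},E_{j0}]=-E_{ji}$ (for distinct $i,j$) then produce every off-diagonal elementary matrix, and together with the $E_{00}-E_{ii}$ (spanning a Cartan) these generate all of $\mathfrak{sl}_n(\bbR)$. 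Combined with the containment $\mathfrak{hol}^\phi\subseteq\mathfrak{sl}_n(\bbR)$ from \cite{WYDens} and the simple connectedness of $S^n$ for $n\geq 2$, we conclude $Hol^\phi=SL_n(\bbR)$. The main step to verify carefully is the structural reduction combining Theorem~\ref{thm:totgeod} with Lemma~\ref{lem:SkSn}; the generation of $\mathfrak{sl}_n(\bbR)$ is then a routine bracket computation.
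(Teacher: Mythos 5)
Your proof is correct and takes essentially the same route as the paper: it bootstraps the $SL_2(\bbR)$ holonomy of Example~\ref{ex:SL2} through totally geodesic great 2-spheres containing the $\partial_r$ direction, using Theorem~\ref{thm:totgeod} together with Lemma~\ref{lem:SkSn} to embed each copy block-diagonally, and then generates $\mathfrak{sl}_n(\bbR)$ by bracketing the resulting elementary matrices before invoking the upper bound from \cite{WYDens}. The only difference is cosmetic: you spell out the verification that $X_\sigma=0$ (since $\nabla\phi$ is tangent to each great 2-sphere), which the paper delegates to the remark following Theorem~\ref{thm:totgeod}.
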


\begin{proof}
  We prove this by relying on the 2-dimensional case of $(S^2,dr^2+\sin^2 rd\theta^2,\cos r)$ proven in Example~\ref{ex:SL2}. The proof will be based on Theorem~\ref{thm:totgeod}, and be carried out on the Lie algebra level.

  Let $p$ be any point on $S^n$ with $r=\pi/2$. (i.e. on the ``equator''). Pick a coordinate system $(x_1,x_2,\ldots,x_n)$ on a neighborhood of $p$, such that at $p$ $\partial_{x_1} = \partial_r$ and $\partial_{x_i}$ are orthonormal. Consider the totally geodesic $S^2$ that contains the point $p$ and whose tangent space at $p$ is spanned by $\partial_{x_1},\partial_{x_i}$ for some $i$.

  Using Lemma~\ref{lem:SkSn} and Theorem~\ref{thm:totgeod}, we know that $Hol^{S^n,\phi}_p$ contains as a subgroup $SL_2(\bbR)$ corresponding to $Hol^{S^2,\phi}_p$. In particular, if we use the notation $E_{ij}$ for the matrix with 1 in the $(i,j)$ entry and 0s everywhere else, then we have established that $\mathfrak{hol}^{S^n,\phi}_p$ contains $E_{1i},E_{i1},E_{ii}-E_{11}$. However, $i$ was arbitrary, so this is true for every $i$.

  Now, consider the basis $E_{ij}$ ($i\neq j$), $E_{11}-E_{ii}$ for $\mathfrak{sl}_n(\bbR)$. We have established that every element of the second family is in $\mathfrak{hol}^{S^n,\phi}_p$. For $E_{ij}$, pick $i\neq j$, and note that $[E_{i1},E_{1j}] = E_{ij}$, therefore, this family is also included. Thus we have established that $\mathfrak{hol}^{S^n,\phi}_p \supseteq \sl_n(\bbR)$, but by \cite{WYDens}, the holonomy is no larger than $SL_n(\bbR)$. Therefore, it must be exactly $SL_n(\bbR)$.
\end{proof}

The other infinite family of examples we want to construct are the ones that $Hol^\phi = SO^+(p,q)$, where $\dim M = p+q$. As mentioned previously, the key tool for these constructions is the result of Levi-Civita, since we are dealing with a pseudo-Riemannian holonomy group.

\begin{prop}\label{prop:sopq}
  Consider $\bbR^n$ constructed in accordance with the result of Levi-Civita (see Propositions~\ref{prop:LC} and \ref{prop:LC-W}), where
  \[
    \phi_n = n + \cos x_n
  \]
  and the other $\phi_i$ are constant with $p$ negative values, and $q-1$ positive values, and $A_i=1$. The resulting weighted holonomy is precisely $SO^+(p,q)$.
\end{prop}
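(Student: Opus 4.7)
The plan is to mirror the proof of Proposition~\ref{prop:sln}: establish the upper bound $Hol^\phi \subseteq SO^+(p,q)$ directly from identifying $\nabla^\phi$ with a pseudo-Riemannian Levi-Civita connection, and obtain the lower bound by restricting to a family of totally geodesic 2-planes, reducing to the 2-dimensional analysis of Section~\ref{sec:2d}, and then generating the full Lie algebra via brackets.

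For the upper bound I would invoke Proposition~\ref{prop:LC-W} to identify $\nabla^\phi$ with the Levi-Civita connection of $\tilde{g}$. By Remark~\ref{rmk:D-LC}(2) the chosen sign pattern (with $\phi_n>0$ uniformly, together with $p$ negative and $q-1$ positive constants) makes $\tilde{g}$ pseudo-Riemannian of signature $(q,p)$, which is interchangeable with $(p,q)$ for our purposes since $SO^+(p,q)\cong SO^+(q,p)$. Thus $Hol^\phi\subseteq O(p,q)$, and since $\bbR^n$ is simply connected the holonomy is connected and lies in the identity component $SO^+(p,q)$.

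For the lower bound, fix a base point $p_0$ and for each $i\in\{1,\ldots,n-1\}$ let $\Sigma_i$ denote the coordinate 2-plane through $p_0$ spanned by $\partial_{x_i}$ and $\partial_{x_n}$. I would first check that $\Sigma_i$ is totally geodesic for $\nabla^\phi$ via a direct computation with Christoffel symbols of $\tilde{g}$, using that the $\phi_j$ are constant for $j\neq n$. Since $d\phi=\frac{\phi_n'(x_n)}{2\phi_n}\,dx_n$ is supported in the $\partial_{x_n}$-direction, $\nabla\phi$ is everywhere tangent to $\Sigma_i$ and the off-diagonal block $X_\sigma$ in Theorem~\ref{thm:totgeod} vanishes. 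Because $g$ itself is diagonal with entries depending only on $x_n$, $g$-parallel transport of any normal basis vector $\partial_{x_j}$ ($j\neq i,n$) along a loop $\sigma\subset\Sigma_i$ merely rescales it by a function of $x_n(t)$, which returns to its initial value; hence $P_\sigma^{\bbR^n,\phi}$ is the identity on $T_{p_0}^\perp \Sigma_i$ and reduces on $T_{p_0}\Sigma_i$ to the intrinsic 2D weighted holonomy of $\Sigma_i$.

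The intrinsic 2D problem on each $\Sigma_i$ is itself an instance of Proposition~\ref{prop:LC-W} in the two indices $i,n$, producing a dual metric whose signature is determined by $\mathrm{sign}(\phi_i)$. Following the argument of Example~\ref{ex:SO+} when $\phi_i<0$ and the standard 2D Riemannian case when $\phi_i>0$, together with a direct Ricci computation made non-trivial by the non-constancy of $\phi_n$, this 2D holonomy is respectively $SO^+(1,1)$ or $SO(2)$. Pulled back through the block decomposition, each $\Sigma_i$ thus contributes to $\mathfrak{hol}^\phi$ a non-zero generator $L_{in}$ supported in the $(i,n)$-plane, which is a boost when $\mathrm{sign}(\phi_i)\neq \mathrm{sign}(\phi_n)$ and a rotation otherwise. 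The brackets $[L_{in},L_{jn}]$ then supply the elementary generators $L_{ij}$ for the remaining planes, so $\mathfrak{hol}^\phi\supseteq\mathfrak{so}(p,q)$, and combined with the upper bound this yields equality. The hard part will be the sign bookkeeping required to confirm that with $p$ negative and $q$ positive $\phi_j$'s the $L_{in}$ really are boosts for the timelike indices and rotations for the spacelike ones in a common orthonormal frame, so that the generated algebra is genuinely $\mathfrak{so}(p,q)$ rather than some proper reductive subalgebra sitting inside it.
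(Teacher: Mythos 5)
Your proposal is correct and follows essentially the same route as the paper: the upper bound from the Levi-Civita identification, the lower bound via the coordinate $2$-planes through $\partial_{x_i},\partial_{x_n}$ (the paper's $M_i$), Theorem~\ref{thm:totgeod} with vanishing $X_\sigma$ and trivial normal transport, the $SO(2)$/$SO^+(1,1)$ dichotomy detected by a $\Ric^\phi$ computation, and brackets of the resulting generators to fill out $\mathfrak{so}(p,q)$. The ``sign bookkeeping'' you flag as the hard part is handled in the paper exactly as you anticipate, by recording $\epsilon_i=\pm1$ according to the sign of $\phi_i$ and noting that $[E_{in}+\epsilon_i E_{ni},E_{jn}+\epsilon_j E_{nj}]=\epsilon_j E_{ij}-\epsilon_i E_{ji}$ already spans the missing root spaces.
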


\begin{lem}\label{lem:sopq2d}
  Let $(N^n,g,\phi)$ be the manifold described in Proposition~\ref{prop:sopq}, and let $M_i^2\subset N$ be the two-dimensional submanifold given by $x_j=0$ for $j\not\in\{i,n\}$. Then the $\nabla^{\phi\mid_{M_i}}$-holonomy group is $SO(2)$ or $SO^+(1,1)$.
\end{lem}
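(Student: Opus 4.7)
The plan is to restrict the setup of Proposition~\ref{prop:sopq} to $M_i$ and recognize it as a two-dimensional instance of the Levi-Civita construction, so that Proposition~\ref{prop:LC-W} in dimension two applies directly on $M_i$. Since $A_j = 1$ and $\phi_j$ is constant for every $j \neq n$, on $M_i$ the only factor of each $\Pi_k$ that depends on the remaining coordinates is the one involving $\phi_n$. A short bookkeeping computation gives
\begin{equation*}
g|_{M_i} = (\phi_n - \phi_i)\bigl[C_i\, dx_i^2 + B(x_n)\, dx_n^2\bigr],
\end{equation*}
where $C_i = \prod_{j\neq i,\,j\neq n}|\phi_i - \phi_j|$ is a positive constant and $B(x_n) = \prod_{j\neq i,\,j\neq n}(\phi_n - \phi_j) > 0$. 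This is exactly the two-dimensional Dini/Levi-Civita form with varying functions $\tilde\phi_1 = \phi_i$ (a constant function of $x_i$), $\tilde\phi_2 = \phi_n$ and coefficients $\tilde A_1 = C_i$, $\tilde A_2 = B(x_n)$.

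Next, $\phi|_{M_i} = \tfrac12\log|\phi_i\phi_n| + \text{const}$, so it agrees up to an additive constant with the density $\tfrac12\log|\tilde\phi_1\tilde\phi_2|$ prescribed by Proposition~\ref{prop:LC-W} applied intrinsically on $M_i$. Since only $d\phi$ enters $\nabla^{\phi|_{M_i}}$, invoking that proposition identifies $\nabla^{\phi|_{M_i}}$ with the Levi-Civita connection of the two-dimensional dual metric
\begin{equation*}
\tilde g_{2D} = \pm\left[\frac{C_i(\phi_n - \phi_i)}{\phi_i^2\,\phi_n}\,dx_i^2 + \frac{B(x_n)(\phi_n - \phi_i)}{\phi_i\,\phi_n^2}\,dx_n^2\right].
\end{equation*}
Because $\phi_n > 0$ and $\phi_n - \phi_i > 0$, the signs of these two coefficients are controlled purely by the sign of $\phi_i$: they agree when $\phi_i > 0$ and are opposite when $\phi_i < 0$. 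Hence $\tilde g_{2D}$ is Riemannian in the first case and Lorentzian of signature $(1,1)$ in the second, so the holonomy lies in $O(2)$ or $O(1,1)$. Since $M_i \cong \mathbb{R}^2$ is simply connected, the holonomy group is connected, so it must be $\{I\}$, $SO(2)$, or $SO^+(1,1)$.

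To rule out the trivial case and conclude the lemma it suffices to verify that $\tilde g_{2D}$ is not flat, which is a routine Gaussian-curvature computation: since $\phi_n = n + \cos x_n$ is non-constant, the coefficients of $\tilde g_{2D}$ depend non-trivially on $x_n$, and the Gaussian curvature is non-zero at generic points. The main obstacle in the argument is the bookkeeping of the first step---identifying which factors of $\Pi_k$ survive as non-constant on $M_i$ and verifying that what remains is exactly the 2D Dini/Levi-Civita form with matching density---but once this is in place the conclusion is an immediate application of Proposition~\ref{prop:LC-W}.
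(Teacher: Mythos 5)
Your route is essentially the paper's: restrict the data to $M_i$, recognize $(M_i,g|_{M_i})$ as a two-dimensional Levi-Civita example (your $\tilde A_1=C_i$, $\tilde\phi_1=\phi_i$, $\tilde A_2=B(x_n)$, $\tilde\phi_2=\phi_n$ is the same reorganization as the paper's $A_1=\Pi_i/(\phi_n-\phi_i)$, $A_2=\Pi_n/(\phi_n-\phi_i)$), observe that $\phi|_{M_i}$ agrees with the density of Proposition~\ref{prop:LC-W} up to an additive constant which does not affect the connection, and conclude that $\nabla^{\phi|_{M_i}}$ is the Levi-Civita connection of a metric whose signature is governed by the sign of $\phi_i$, so the holonomy is $SO(2)$, $SO^+(1,1)$, or trivial. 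All of that matches the paper's argument and is correct.

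The one place you fall short is the exclusion of the trivial case, which is exactly what the lemma needs. Your justification --- that the coefficients of $\tilde g_{2D}$ depend non-trivially on $x_n$ and hence the Gaussian curvature is nonzero at generic points --- is not a valid inference: a metric can have non-constant coefficients in a given chart and still be flat (e.g. $dx^2+x^2\,dy^2$), so non-constancy of $\phi_n$ alone does not rule out flatness. You correctly identify that a curvature computation is what is required, but you do not perform it, and the heuristic offered in its place does not substitute for it. The paper closes this step by explicitly computing the weighted Ricci curvature on $M_i$ and exhibiting $\Ric^\phi(\partial_i,\partial_i)\neq 0$ along $x_n=0$; to complete your proof you would need to carry out the analogous computation (the Gaussian curvature of $\tilde g_{2D}$, or equivalently $\Ric^\phi$ of the restricted weighted structure) and verify it is nonzero somewhere.
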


\begin{proof}[Proof of Lemma~\ref{lem:sopq2d}]
  Observe that we can view $(M_i^2,g\mid_{M_i})$ as itself being a Levi-Civita example, by letting $A_1 = \frac{\Pi_i}{\phi_n-\phi_i},\ \phi_1 = \phi_i$ and $A_2 = \frac{\Pi_n}{\phi_n-\phi_i},\ \phi_2 = \phi_n$. Furthermore, $\phi\mid_{M_i}$ is up to an additive constant the same as the density function given by Proposition~\ref{prop:LC-W}, and the additive constant does not impact the connection. Therefore, the holonomy is either one of the claimed ones or trivial. Explicit computation shows that $\Ric^\phi$ is non-trivial. In particular,
  \[
    \Ric^\phi(\partial_i,\partial_i) = \frac{-\phi_i\Pi_i}{2\prod_{j=1}^n (n+1-\phi_j)} \neq 0
  \]
  whenever $x_n=0$. Therefore, the holonomy group can't be trivial, and must be $SO(2)$ or $SO^+(1,1)$, depending on the sign of $\phi_i$.
\end{proof}

\begin{lem}\label{lem:parsopq}
  Let $M_i$ and $N$ be as in Lemma~\ref{lem:sopq2d}, then, $M_i\subset N$ is a totally geodesic submanifold, and $g$-parallel translation along any loop $\sigma$ in $M_i$ based at $p$ leaves $T^\perp_p M_i$ fixed.
\end{lem}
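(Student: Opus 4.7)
The plan is to exploit the very special structure of the metric produced by Levi-Civita's construction under the hypotheses of Proposition~\ref{prop:sopq}: since every $\phi_j$ with $j\neq n$ is constant and every $A_j\equiv 1$, each diagonal coefficient $g_{jj}=\Pi_j$ depends only on the single coordinate $x_n$. Once that observation is in hand, everything reduces to routine bookkeeping on Christoffel symbols.

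First, to prove total geodesy of $M_i$, it suffices to check that $\Gamma_{ab}^c = 0$ whenever $a,b\in\{i,n\}$ and $c\notin\{i,n\}$. For $a\neq b$, this vanishes because any Christoffel symbol of a diagonal metric with three distinct indices is zero. For $a=b$, the formula $\Gamma_{aa}^c=-\tfrac{1}{2}g^{cc}\partial_c g_{aa}$ vanishes because $g_{aa}$ depends only on $x_n$ while $c\neq n$. Next, to see that parallel transport along a curve in $M_i$ preserves the splitting $TM_i\oplus T^\perp M_i$, one checks $\Gamma_{ak}^c=0$ for $a\in\{i,n\}$, $k\notin\{i,n\}$, $c\in\{i,n\}$; again the three-distinct-indices cases are automatic, and the remaining case ($c=a$) reduces to $\partial_k g_{aa}$, which is zero for the same reason.

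The second assertion of the lemma then amounts to analysing parallel transport inside $T^\perp M_i$. The only potentially non-vanishing symbol contributing is $\Gamma_{nk}^k = \tfrac{1}{2}\partial_n\log g_{kk}$, so the parallel transport ODE for a normal component $V^k$ along $\sigma(t)=(x_i(t),x_n(t))$ collapses to
\[
\dot V^k(t) = -\dot x_n(t)\,\Gamma_{nk}^k(\sigma(t))\,V^k(t)
= -\tfrac{1}{2}\tfrac{d}{dt}\log g_{kk}(x_n(t))\,V^k(t).
\]
Integrating over a loop gives $V^k(T)/V^k(0)=\exp\bigl(-\tfrac{1}{2}\bigl[\log g_{kk}(x_n(T))-\log g_{kk}(x_n(0))\bigr]\bigr)=1$, proving that normal vectors return to themselves. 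The only real point that needs care is the case analysis for the Christoffel symbols; after making the structural observation that $g_{jj}$ is a function of $x_n$ alone, there is no genuine obstacle.
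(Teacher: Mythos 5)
Your proof is correct, but it takes a different route from the paper's. You verify everything by direct computation of Christoffel symbols, resting on the (correct and genuinely key) structural observation that under the hypotheses of Proposition~\ref{prop:sopq} every diagonal coefficient $g_{jj}=\Pi_j$ is a function of $x_n$ alone; from this you get vanishing of the second fundamental form, invariance of the splitting $TM_i\oplus T^\perp M_i$ under transport along curves in $M_i$, and then an explicit scalar ODE $\dot V^k=-\tfrac12\tfrac{d}{dt}\log g_{kk}(x_n(t))\,V^k$ for each normal component, whose integral over a loop is exactly $1$. The paper instead argues structurally, in analogy with Lemma~\ref{lem:SkSn}: it builds a chain $M_i=X_2\subset X_3\subset\cdots\subset X_n=N$, where each step adds one coordinate and $g_{k+1}=g_k+u(x_n)\,dx_{t_{k+1}}^2$ is a warped product over $X_k$, so each $X_k$ is totally geodesic in $X_{k+1}$ and parallel transport along loops in $X_k$ preserves (and acts trivially on) the added normal direction. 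What your version buys is explicitness: in particular, the step from ``the normal line is preserved'' to ``the transport acts as the identity on it'' is left rather terse in the paper, whereas your integration of the transport ODE around the loop, giving $V^k(T)/V^k(0)=\sqrt{g_{kk}(x_n(0))/g_{kk}(x_n(T))}=1$, makes it completely transparent. What the paper's chain argument buys is independence from coordinate bookkeeping and a closer parallel with the sphere case of Lemma~\ref{lem:SkSn}. Both arguments use the same underlying fact about the metric, so this is a difference of presentation and rigor of the final step rather than of substance; your proof is complete as written.
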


\begin{proof}[Proof of Lemma~\ref{lem:parsopq}]
  This result is entirely analogous to Lemma~\ref{lem:SkSn}. We can construct a chain of embeddings $M_i = X_2 \subset X_3 \subset \cdots\subset X_n = N$, each $X_i$ including one more coordinate direction from $N$. Let $g_k = g\mid_{X_k}$, then, $g_{k+1} = g_k + u(x_n)dx_{t_{k+1}}^2$, where $x_{t_{k+1}}$ denotes the coordinate added in $X_{k+1}$. Since each metric is a warped product of the previous one, each $X_k$ is therefore totally geodesic inside $X_{k+1}$. For parallel translation, observe that viewing $\sigma$ as being a loop in $X_k$, means that $x_{t_{k+1}}$ direction is preserved. Therefore, the translation acts trivially on $T^\perp_p M_i$.
\end{proof}

\begin{proof}[Proof of Proposition~\ref{prop:sopq}]
  From the work of Levi-Civita, we know that the holonomy is a subgroup of $SO^+(p,q)$.
  
  Using Lemmas~\ref{lem:sopq2d} and \ref{lem:parsopq}, we conclude that $E_{in}+ \epsilon_i E_{ni}$ is in $\mathfrak{hol}^\phi$ for every $i\in\{1,\ldots,n-1\}$, with $\epsilon_i=1$ if $M_i$ has $SO^+(1,1)$ holonomy, and $\epsilon_i=-1$ if $M_i$ has $SO(2)$ holonomy.

  Now, consider $i\neq j$, then
  \[[E_{in}+\epsilon_i E_{ni}, E_{jn}+\epsilon_j E_{nj}] = \epsilon_j E_{ij} - \epsilon_i E_{ji}\in\mathfrak{hol}^\phi\]
  Therefore, the holonomy Lie algebra is all of $\mathfrak{so}^+(p,q)$. So, the group is $SO^+(p,q)$ as claimed.
\end{proof}

%\todo[inline]{Prove Proposition~\ref{prop:sopq}}

\begin{ex}\label{ex:Heis}
  One other 3 dimensional example we wish to present is a manifold with Heisenberg group as its $\nabla^\phi$-holonomy. Let $M = \bbR^3$ with $g = e^{x}dx^2 + e^{2x+y}dy^2 + e^{2x+2y+z}dz^2$, and $\phi = x+y+z$.

  Direct observation of the parallel transport ODEs shows that the holonomy group is no larger than the Heisenberg group. Explicit computations show that the holonomy Lie algebra at the origin ($\mathfrak{hol}^\phi_{0}$) contains an element of the form
  \[
    A = \begin{pmatrix}
      0 & 1 & x\\
      0 & 0 & 0\\
      0 & 0 & 0
    \end{pmatrix}
  \]
  This element is obtained by traversing the piece-wise linear loop $(0,0,0)\to(1,0,0)\to(1,1,0)\to(0,1,0)\to(0,0,0)$, taking the logarithm of the resulting element of $Hol^\phi$ and re-scaling it.

  To obtain the rest of the Heisenberg Lie algebra, we make an observation that this example is self-dual. In the sense that there is a diffeomorphism $F:M\to M$ such that $F^* g = \tilde{g} = e^{-2\phi}g$ and $F^*\phi = -\phi$. Namely $F(x,y,z) = (-z,-y,-x)$; moreover, $F$ preserves the origin. This gives us two isomorphisms between the holonomy groups $Hol^{g,\phi}$ and $Hol^{\tilde{g},-\phi}$ at the origin, namely, the one induced by $F$, and the one given in Corollary~\ref{cor:DualHol}.

  The one induced by $F$ is obtained simply by conjugating by
  \[
    dF_{0} = \begin{pmatrix}
      0 & 0 & -1\\
      0 & -1 & 0\\
      -1 & 0 & 0
    \end{pmatrix}
  \]
  and the one from Corollary~\ref{cor:DualHol} is obrained by taking inverse transpose (since at the origin $e^{-\phi}g = dx^2+dy^2+dz^2$). Operating on the Lie algebra level, the isomorphism induced by $F$ is still the conjugation by $dF_0$, but the duality isomorphism becomes $A\to -A^T$. We know compose these isomorphisms.

  Since
  \[
    A = \begin{pmatrix}
      0 & 1 & x\\
      0 & 0 & 0\\
      0 & 0 & 0
    \end{pmatrix} \in \mathfrak{hol}^{g,\phi}_0
  \]
  conjugation by $dF_0$ gives us
  \[
    \begin{pmatrix}
      0 & 0 & 0\\
      0 & 0 & 0\\
      x & 1 & 0
    \end{pmatrix} \in\mathfrak{hol}^{\tilde{g},-\phi}_0
  \]
  and then duality gives us
  \[
    B = \begin{pmatrix}
      0 & 0 & -x\\
      0 & 0 & -1\\
      0 & 0 & 0
    \end{pmatrix} \in\mathfrak{hol}^{g,\phi}_0.
  \]
  Now, observe that
  \[
    [B,A] = \begin{pmatrix}
      0 & 0 & 1\\
      0 & 0 & 0\\
      0 & 0 & 0
    \end{pmatrix}
  \]
  Therefore, $A,B,[B,A]$ span the entire Heisenberg Lie algebra, and so $Hol^{g,\phi}_0$ is precisely the Heisenberg group.
\end{ex}

This example together with its 2-dimensional analogue in Example~\ref{ex:R} lead us to the following conjecture:

\begin{conj}
  The holonomy Lie algebra of $(M^n,g,\phi)$ computed at $(0,0,\ldots,0)$ where $\phi=x_1+x_2+\cdots+x_n$, and $g = e^{x_1}dx_1^2 + e^{2x_1+x_2}dx_2^2 + \cdots + e^{2x_1+\cdots+2x_{n-1}+x_n}dx_n^2$ is precisely the strictly upper triangular Lie algebra.
\end{conj}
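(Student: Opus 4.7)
The plan is to prove the conjecture in two matching parts: first that $\mathfrak{hol}^\phi_0 \subseteq \mathfrak{n}$ where $\mathfrak{n}$ denotes the strictly upper triangular Lie algebra, then that the Chevalley generators $E_{k,k+1}$ all lie in $\mathfrak{hol}^\phi_0$ and hence generate all of $\mathfrak{n}$. The starting point is a direct calculation of the weighted Christoffel symbols, which simplifies dramatically because of the telescoping warped form of $g$ together with the fact that $d\phi = dx_1 + \cdots + dx_n$. What one finds is
\[
  \nabla^\phi_{\partial_a} \partial_b = -\partial_{\min(a,b)} \quad (a \neq b), \qquad \nabla^\phi_{\partial_a} \partial_a = -\tfrac{3}{2}\partial_a - \sum_{k < a} \frac{g_{aa}}{g_{kk}}\, \partial_k,
\]
and essentially the entire argument reduces to these two formulas.

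For the upper bound, both formulas immediately give $\nabla^\phi_X \partial_j \in D_j := \operatorname{span}(\partial_1, \ldots, \partial_j)$ for every tangent $X$, so each $D_k$ is $\nabla^\phi$-parallel and $Hol^\phi_0$ preserves the full flag $D_1 \subset D_2 \subset \cdots \subset D_n$. The induced connection on the quotient line bundle $D_k / D_{k-1}$ has connection 1-form
\[
  c_k = -\tfrac{3}{2}\, dx_k - \sum_{j > k} dx_j = -d\left(\tfrac{3}{2} x_k + \sum_{j > k} x_j\right),
\]
which is exact. Parallel transport on each line $D_k / D_{k-1}$ around any closed loop is therefore trivial, which forces every element of $Hol^\phi_0$ to be unit upper triangular in the basis $\partial_1, \ldots, \partial_n$. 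Hence $\mathfrak{hol}^\phi_0 \subseteq \mathfrak{n}$.

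For the lower bound I would invoke the Ambrose-Singer theorem: the curvature endomorphism $R^\phi_0(X, Y)$ at the origin automatically belongs to $\mathfrak{hol}^\phi_0$. A short computation from the two formulas above yields the sharp identity
\[
  R^\phi_0(\partial_k, \partial_{k+1}) = E_{k, k+1}, \qquad k = 1, \ldots, n-1,
\]
with $E_{ij}$ the matrix unit. Since $[E_{i,j}, E_{j,\ell}] = E_{i,\ell}$, these $n-1$ elements generate $\mathfrak{n}$ as a Lie algebra, giving $\mathfrak{hol}^\phi_0 \supseteq \mathfrak{n}$ and completing the proof. The main technical obstacle is verifying this sharp identity, namely that $R^\phi_0(\partial_k, \partial_{k+1}) \partial_\ell$ vanishes for $\ell \neq k+1$ rather than only vanishing modulo higher-depth terms. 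For $\ell \notin \{k, k+1\}$ the two covariant Hessians produce identical expressions that cancel outright thanks to the $a \neq b$ formula. For $\ell = k$ the derivatives of the coefficients $g_{kk}/g_{mm}$ must be tracked carefully and shown to balance the contribution coming from differentiating $\nabla^\phi_{\partial_k} \partial_k$ itself, which is the most delicate part of the calculation.
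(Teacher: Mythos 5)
Your proposal is correct, and it does something the paper itself does not: in the paper this statement is only a conjecture, supported by the cases $n=2,3$ (Examples~\ref{ex:R} and~\ref{ex:Heis}), which are settled by integrating the parallel-transport ODEs along explicit rectangular loops and, for $n=3$, by the self-duality trick (conjugation by $dF_0$ combined with Corollary~\ref{cor:DualHol}); the paper explicitly stops short of general $n$ because of the unknown block $X_\sigma$ in Theorem~\ref{thm:totgeod}. Your route is uniform in $n$ and avoids both the loop integrations and the duality argument. I checked your two structural formulas and they are right: the nonzero weighted symbols are $\tilde{\Gamma}^i_{ii}=-\tfrac32$, $\tilde{\Gamma}^i_{ij}=-1$ for $j>i$, and $\tilde{\Gamma}^k_{ii}=-g_{ii}/g_{kk}$ for $k<i$, which gives $\nabla^\phi_{\partial_a}\partial_b=-\partial_{\min(a,b)}$ for $a\neq b$ and your expression for $\nabla^\phi_{\partial_a}\partial_a$. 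Hence the flag $D_1\subset\cdots\subset D_n$ is parallel and the induced form on $D_k/D_{k-1}$ is the exact form $-d\bigl(\tfrac32 x_k+\sum_{j>k}x_j\bigr)$, so the holonomy is unipotent upper triangular; this makes precise, for all $n$, what the paper only observes informally from the ODEs in dimensions $2$ and $3$. The delicate curvature identity you flag does hold: for $\ell\notin\{k,k+1\}$ the two iterated derivatives are identical constant-coefficient fields and cancel identically; for $\ell=k$ the cancellation works because $\partial_{k+1}\bigl(g_{kk}/g_{mm}\bigr)=0$; and for $\ell=k+1$ one finds $\nabla^\phi_{\partial_k}\nabla^\phi_{\partial_{k+1}}\partial_{k+1}\big|_0=2\partial_k$ against $\nabla^\phi_{\partial_{k+1}}\nabla^\phi_{\partial_k}\partial_{k+1}=\partial_k$, so indeed $R^\phi_0(\partial_k,\partial_{k+1})=E_{k,k+1}$, and Ambrose--Singer (legitimate here since $\bbR^n$ is simply connected, so $Hol^\phi_0$ is the restricted group --- worth stating) together with $[E_{i,i+1},E_{i+1,i+2}]=E_{i,i+2}$ yields all of $\mathfrak{n}$. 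What your approach buys is a complete proof of the conjecture in every dimension, with the lower bound coming from pointwise curvature rather than from carefully chosen loops; what the paper's computations buy is explicit holonomy elements (e.g.\ the quantity $\Omega(\sigma)$ and the concrete matrices in the appendix), which a curvature argument does not produce. To turn your plan into a finished proof you only need to write out the curvature verification sketched above; there is no gap in the structure of the argument.
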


We have now seen this for $n=2,3$. The self-duality observation used in Example~\ref{ex:Heis} is still valid in higher dimensions. Additionally, one may be able to use Theorem~\ref{thm:totgeod}, since $(x_1,\ldots,x_k,0,\ldots,0)$ is a $k$-dimensional totally-geodesic submanifold. However, one would need to show certain properties of $X_\sigma$ to make this easy to use. In Example~\ref{ex:Heis}, the key was finding a loop such that the resulting Lie algebra element was not one of
\[
  \begin{pmatrix}
    0 & 1 & x\\
    0 & 0 & -1\\
    0 & 0 & 0
  \end{pmatrix},\qquad
  \begin{pmatrix}
    0 & 1 & x\\
    0 & 0 & 1\\
    0 & 0 & 0
  \end{pmatrix},\qquad
  \begin{pmatrix}
    0 & 0 & 1\\
    0 & 0 & 0\\
    0 & 0 & 0
  \end{pmatrix}.
\]
Since such elements would only generate a 1 or 2 dimensional subalgebra of the Heisenberg Lie algebra under the the composition of the duality and $dF_0$ isomorphisms.

%%% Local Variables:
%%% mode: latex
%%% TeX-master: "Paper"
%%% End:

\appendix
\section{Explicit Holonomy Computations}\label{sec:Compute}

We provide full computations for all the explicit examples here. For each example we provide the weighted Christoffel symbols ($\tilde{\Gamma}_{ij}^k$), the system or systems of ODEs, their solutions (or partial solutions if sufficient), and the resulting matrices. Note that if the path used is piece-wise smooth, then there will be a system if ODEs for each smooth piece. The purpose of most of these computations is to show that the resulting holonomy group is no smaller than claimed, for the cases where other techniques are insufficient. In other cases, partial solutions to translation along arbitrary loops will provide an upper bound on the size of the holonomy group. The notation throughout this appendix will be $X_u$ for the $u$ component of the parallel vector field along the given path.

\begin{repex}{ex:SL2}
  We consider $(S^2,r^2+\sin^2 r d\theta^2, \cos r)$ as our manifold with density. Then, the non-zero weighted Christoffel symbols are as follows:
  \begin{align*}
    \tilde{\Gamma}_{rr}^r &= 2\sin r\\
    \tilde{\Gamma}_{r\theta}^\theta &= \cot r + \sin r\\
    \tilde{\Gamma}_{\theta\theta}^r &= -\cos r\sin r
  \end{align*}

  We begin by considering the family of loops $\alpha^s(t)$. Since the loop consists of four smooth curves, we will have four systems of ODEs:

  Along the path $(\pi/2,0)\to (\pi/2+s,0)$, consider the parametrization $(\pi/2+t,0)$ $t\in[0,s]$. Then the system of ODEs is:

  \begin{align*}
    2\sin(\pi/2+t)X_r(t) + X'_r(t) &= 0\\
    \left(\cot(\pi/2+t) + \sin(\pi/2+t)\right)X_\theta(t) + X'_\theta(t) &= 0
  \end{align*}

  Solving, we get:
  \begin{align*}
    X_r(t) &= Ae^{-2\sin t} & X_r(s) &= X_r(0) e^{-2\sin s}\\
    X_\theta(t) &= B\sec t e^{-\sin t} & X_\theta(s) &= X_\theta(0) \sec s e^{-\sin s}
  \end{align*}

  Which corresponds to the matrix
  \[
    \begin{pmatrix}
      e^{-2\sin s} & 0\\
      0 & \sec s\ e^{-\sin s}
    \end{pmatrix}
  \]

  The second component of $\alpha^s(t)$ is the path $(\pi/2+s,0) \to (\pi/2+s,2\pi)$, which we parametrize as $(\pi/2+s,t)$ with $t\in[0,2\pi]$. Which gives us
  \begin{align*}
    -\cos(\pi/2+s)\sin(\pi/2+s)X_\theta(t) + X'_r(t) &= 0\\
    \left(\cot(\pi/2+s) + \sin(\pi/2+s)\right) X_r(t) + X'_\theta(t) &= 0
  \end{align*}

  Let $k = \sqrt{\sin s\cos^2 s - \sin^2 s}$ (since $s\in(0,\xi-\pi/2)$, this is a real number). Solving, we get:
  \begin{align*}
    X_r(t) &= (Ae^{-kt}+Be^{kt})\sin s\cos s = (\alpha\cosh(kt) - \beta\sinh(kt))\sin s\cos s\\
    X_\theta(t) &= (Ae^{-kt}-Be^{kt})k = (\beta\cosh(kt)-\alpha\sinh(kt))k\\
  \end{align*}
  Going from $t=0$ to $t=2\pi$, this corresponds to the matrix
  \[
    \begin{pmatrix}
      \cosh(2\pi k) & \frac{-\sin s\cos s\sinh(2\pi k)}{k}\\
      \frac{-k\sinh(2\pi k)}{\sin s\cos s} & \cosh(2\pi k)
    \end{pmatrix}
  \]

  The third component is precisely the reverse of the first, so the matrix is the inverse of the first.
  \[
    \begin{pmatrix}
      e^{2\sin s} & 0\\
      0 & \cos s\ e^{\sin s}
    \end{pmatrix}
  \]

  The final component is similar to the second. Parametrize it as $(pi/2,2\pi-t)$, then we get
  \begin{align*}
    X_r'(t) &= 0\\
    -X_r(t) + X_\theta(t) &= 0
  \end{align*}
  solving we get
  \begin{align*}
    X_r(t) &= A & X_r(2\pi) &= X_r(0)\\
    X_\theta(t) &= At + B & X_\theta(2\pi) &= 2\pi X_r(0) + X_\theta(0)
  \end{align*}
  which corresponds to the matrix
  \[
    \begin{pmatrix}
      1 & 0\\
      2\pi & 1
    \end{pmatrix}
  \]

  Multiplying all these together, we get that the parallel translation along $\alpha^s(t)$ is given by:
  \[
    \begin{pmatrix}
      \cosh(2\pi k) & \frac{-\sin s\sinh(2 \pi k)e^{\sin s}}{k}\\
      \frac{-k\sinh(2\pi k)e^{-\sin s}}{\sin s} + 2\pi\cosh(2\pi k)& \frac{-2\pi\sin s\sinh(2\pi k)e^{\sin s}}{k} + \cosh(2\pi k)
    \end{pmatrix}
  \]

  Let $P(s)$ denote the above parallel translation. Since $P(s)\to I$ as $s\to 0$, we evaluate
  \[
    \lim_{s\to 0} \frac{1}{s}\left[P(s)-I\right] = \begin{pmatrix}
      2\pi^2 & -2\pi\\
      4\pi + \frac{8\pi^3}{3}& -2\pi^2
    \end{pmatrix} \in \mathfrak{hol}^\phi
  \]

  The limits were found by factoring, and computing Taylor series of numerator and denominator.

  We next consider the family of loops $\beta^s(t)$. Once again, the loop consists of four smooth curves, so we will have four systems of ODEs:

  Along the path $(\pi/2,0) \to (\xi,0)$, we can use the same parametrization as with the first segment of $\alpha$, to get
  \[
    \begin{pmatrix}
      e^{1-\sqrt{5}} & 0\\
      0 & \frac{1+\sqrt{5}}{2}e^{\frac{1-\sqrt{5}}{2}}
    \end{pmatrix}
  \]

  Along the path $(\xi,0) \to (\xi,s)$, we can have the same system as with second piece of $\alpha$. However, $\xi$ was chosen precisely, so that $\cot(\xi) + \sin(\xi) = 0$. Which means that our system degenerates into:
  \begin{align*}
    X_r'(t) &= \sin\xi\cos\xi X_\theta(t)\\
    X_\theta'(t) &= 0
  \end{align*}
  which is easily solved as
  \begin{align*}
    X_r(t) &= X_r(0) + X_\theta(0)t\sin\xi\cos\xi\\
    X_\theta(t) &= X_\theta(0)
  \end{align*}
  giving us the matrix
  \[
    \begin{pmatrix}
      1 & -s \left(\frac{\sqrt{5}-1}{2}\right)^{3/2}\\
      0 & 1
    \end{pmatrix}
  \]

  The third segment gives us the inverse of the first matrix, since our example is fully rotationally symmetric.

  The fourth segment gives us the same system of equations as the fourth segment of $\alpha$, but since we only traverse from $(\pi/2,s)\to(\pi/2,0)$, the resulting matrix is
  \[
    \begin{pmatrix}
      1 & 0\\
      0 & s
    \end{pmatrix}
  \]

  Multiplying the four matrices together, we get
  \[
    P(s) = \begin{pmatrix}
      1 & s\left(\frac{1-\sqrt{5}}{2}\right)e^{\frac{\sqrt{5}-1}{2}}\\
      s & 1 + s^2\left(\frac{1-\sqrt{5}}{2}\right)e^{\frac{\sqrt{5}-1}{2}}
    \end{pmatrix}
  \]
  Differentiating at $s=0$, we get
  \[
    \begin{pmatrix}
      0 & \left(\frac{1-\sqrt{5}}{2}\right)e^{\frac{\sqrt{5}-1}{2}}\\
        1 & 0
    \end{pmatrix} \in \mathfrak{hol}^\phi
  \]
\end{repex}

\begin{repex}{ex:2d}
  We consider $(\bbR^2,dx^2 + e^{2xy}dy^2, xy)$ as our manifolds with density. Then, the non-zero weighted Christoffel symbols are as follows:
  \begin{align*}
    \tilde{\Gamma}_{xx}^x &= -2y\\
    \tilde{\Gamma}_{xy}^x &= -x\\
    \tilde{\Gamma}_{yy}^x &= -y e^{2xy}\\
    \tilde{\Gamma}_{yy}^y &= -x
  \end{align*}

  We begin with the first loop under consideration: $(0,0)\to(1,0)\to(1,1)\to(0,1)\to(0,0)$.

  We look at the first segment, parametrized as $(t,0)$ with $t\in[0,1]$. Then, the resulting system of ODEs is
  \begin{align*}
    X_x'(t) &= t X_y(t)\\
    X_y'(t) &= 0
  \end{align*}
  Solving, we get
  \begin{align*}
    X_x(t) &= X_x(0) + \frac{t^2}{2}X_y(0)\\
    X_y(t) &= X_y(0)
  \end{align*}
  which gives us the matrix
  \[
    \begin{pmatrix}
      1 & \frac{1}{2}\\
      0 & 1
    \end{pmatrix}
  \]

  Consider $(1,t)$ with $t\in[0,1]$ on the second segment. Then the resulting system is
  \begin{align*}
    X_x'(t) &= X_x(t) + te^{2t}X_y(t)\\
    X_y'(t) &= X_y(t)
  \end{align*}
  Solving, we get
  \begin{align*}
    X_x(t) &= A\left(\frac{t}{2}-\frac{1}{4}\right)e^{3t} + B e^t & X_x(t) &= X_x(0)e^t + X_y(0)\left[\left(\frac{t}{2}-\frac{1}{4}\right)e^{3t} + \frac{1}{4}e^t\right]\\
    X_y(t) &= A e^t & X_y(t) &= X_y(0)e^t
  \end{align*}
  giving us the matrix
  \[
    \begin{pmatrix}
      e & \frac{e^3+e}{4}\\
      0 & e
    \end{pmatrix}
  \]

  On the third segment, we consider the parametrization $(1-t,1)$, to get
  \begin{align*}
    X_x'(t) &= -2X_x(t) + (t-1)X_y(t)\\
    X_y'(t) &= 0
  \end{align*}
  solving we get
  \begin{align*}
    X_x(t) &= A\frac{2t-3}{4} + Be^{-2t} & X_x(t) &= X_x(0)e^{-2t} + X_y(0)\left(\frac{2t-3}{4} + \frac{3}{4}e^{-2t}\right)\\
    X_y(t) &= A & X_y(t) &= X_y(0)
  \end{align*}
  giving us the matrix
  \[
    \begin{pmatrix}
      e^{-2} & \frac{3e^{-2}-1}{4}\\
      0 & 1
    \end{pmatrix}
  \]

  Along the last segment, we consider the parametrization $(0,1-t)$, to get
  \begin{align*}
    X_x'(t) &= (t-1)X_y(t)\\
    X_y'(t) &= 0
  \end{align*}
  solving we get
  \begin{align*}
    X_x(t) &= X_x(0) + X_y(0)\left(\frac{t^2}{2}-t\right)\\
    X_y(t) &= X_y(0)
  \end{align*}
  wich gives us the matrix
  \[
    \begin{pmatrix}
      1 & -1/2\\
      0 & 1
    \end{pmatrix}
  \]
  
  Multiplying the four matrices together, we get
  \[
    P = \begin{pmatrix}
      e^{-1} & \frac{3-e^2}{2e}\\
      0 & e
    \end{pmatrix}
  \]

  The corresponding Lie algebra element then is
  \[
    \begin{pmatrix}
      -1 & \frac{3-e^2}{e^2-1}\\
      0 & 1
    \end{pmatrix}\in\mathfrak{hol}^\phi
  \]

  We now consider the second loop: $(0,0)\to(2,0)\to(2,1/2)\to(0,1/2)\to(0,0)$.

  The first segment gives us the same ODEs as the first segment of the first loop, so we get the matrix
  \[
    \begin{pmatrix}
      1 & 2\\
      0 & 1
    \end{pmatrix}
  \]
  
  For the second segment, we use the parametrization $(2,t)$ with $t\in[0,1/2]$, to get the system
  \begin{align*}
    X_x'(t) &= 2X_x(t)+te^{4t}X_y(t)\\
    X_y'(t) &= 2X_y(t)
  \end{align*}
  solving we get
  \begin{align*}
    X_x(t) &= A\left(\frac{t}{4}-\frac{1}{16}\right)e^{6t} + Be^{2t} & X_x(t) &= X_x(0)e^{2t} + X_y(0)\left[\left(\frac{t}{4}-\frac{1}{16}\right)e^{6t} + \frac{1}{16}e^{2t}\right]\\
    X_y(t) &= Ae^{2t} & X_y(t) &= X_y(0)e^{2t}
  \end{align*}
  giving us the matrix
  \[
    \begin{pmatrix}
      e & \frac{e^3+e}{16}\\
      0 & e
    \end{pmatrix}
  \]

  Parametrize the third segment as $(2-t,1/2)$ with $t\in[0,2]$, to get
  \begin{align*}
    X_x'(t) &= -X_x(t)+(t-2)X_y(t)\\
    X_y'(t) &= 0
  \end{align*}
  which we solve to get
  \begin{align*}
    X_x(t) &= C(t-3) + De^{-t} & X_x(t) &= X_x(0)e^{-t} + X_y(0)\left(t-3+3e^{-t}\right)\\
    X_y(t) &= C & X_y(t) &= X_y(0)
  \end{align*}
  giving us the matrix
  \[
    \begin{pmatrix}
      e^{-2} & 3e^{-2}-1\\
      0 & 1
    \end{pmatrix}
  \]

  We parametrize the last segment as $(0,1/2-t)$ with $t\in[0,1/2]$, to get
  \begin{align*}
    X_x'(t) &= (t-1/2)X_y(t)\\
    X_y'(t) &= 0
  \end{align*}
  Solving to get
  \begin{align*}
    X_x(t) &= X_x(0) + X_y(0)\left(\frac{t^2-t}{2}\right)\\
    X_y(t) &= X_y(0)
  \end{align*}
  which gives us the matrix
  \[
    \begin{pmatrix}
      1 & -1/8\\
      0 & 1
    \end{pmatrix}
  \]

  Multiplying the four together, we get:
  \[
    \begin{pmatrix}
      e^{-1} & \frac{81-17e^2}{16e}\\
      0 & e
    \end{pmatrix}
  \]
  which gives us
  \[
    \begin{pmatrix}
      -1 & \frac{81-17e^2}{8(1-e^2)}\\
      0 & 1
    \end{pmatrix}\in\mathfrak{hol}^\phi
  \]
\end{repex}

\begin{repex}{ex:R}
  % We are considering the manifold $(\bbR^2,g_{Eucl} = dx^2+dy^2, y)$ as our manifold with density. The non-zero weighted Christoffel symbols are:
  % \begin{align*}
  %   \tilde{\Gamma}_{xy}^x &= -1\\
  %   \tilde{\Gamma}_{yy}^y &= -2
  % \end{align*}
  
  % Consider a loop $\sigma(t)=(a(t),b(t))$, and a $\nabla^\phi$-parallel vector field along it $u(t)\partial_x + v(t)\partial_y$. Then, the parallel transport ODEs are
  % \begin{align*}
  %   u'(t) &= b'(t)u(t) + a'(t)v(t)\\
  %   v'(t) &= 2b'(t)v(t)
  % \end{align*}

  % Solving, we get:
  % \begin{align*}
  %   u(t) &= u(0) e^{b(t)-b(0)} + v(0) e^{b(t)-2b(0)}\int_0^t a'(\tau)e^{b(\tau)}d\tau\\
  %   v(t) &= v(0) e^{2b(t)-2b(0)}.
  % \end{align*}
  % But, since $\sigma(t)$ is a loop, and $b(t) = y$, at the end point $t=T$, we have:
  % \begin{align*}
  %   u(T) &= u(0) + v(0) e^{-y}\int_0^T a'(\tau)e^{b(\tau)}d\tau\\
  %   v(T) &= v(0)
  % \end{align*}

  % For computing the Ricci, recall that $\Ric^\phi = \Ric + (n-1)\Hess\phi + d\phi\otimes d\phi$. In our case $n=2$, $\Ric=0$, $\Hess\phi=0$, and $d\phi = dy$, so $\Ric^\phi = dy^2$.

  We are considering the manifold $(M,g,\phi)$ with $M = \bbR^2$, $g = e^x dx^2 + e^{2x+y}dy^2$, and $\phi = x+y$.

  We can compute the non-zero weighted Christoffel symbols:
  \begin{align*}
    \tilde{\Gamma}_{xx}^x &= \frac{-3}{2} & \tilde{\Gamma}_{xy}^x &= -1\\
    \tilde{\Gamma}_{yy}^x &= -e^{x+y} & \tilde{\Gamma}_{yy}^y &= \frac{-3}{2}
  \end{align*}

  Consider a loop $\sigma(t)=(a(t),b(t))$, and a $\nabla^\phi$-parallel vector field along it $u(t)\partial_x + v(t)\partial_y$. Then, the parallel transport ODEs are
  \begin{align*}
    u'(t) &= \left(\frac{3}{2}a'(t)+b'(t)\right) u(t) + \left(a'(t)+e^{a(t)+b(t)}b'(t)\right) v(t)\\
    v'(t) &= \frac{3}{2}b'(t)v(t)
  \end{align*}
  solving, we get:
  \begin{align*}
    u(t) &= A e^{\frac{3}{2}a(t) + b(t)}\int_0^t \left(a'(\tau)e^{\frac{-3a(\tau)+b(\tau)}{2}} + b'(\tau)e^{\frac{-a(\tau)+3b(\tau)}{2}}\right)d\tau + Be^{\frac{3}{2}a(t)+b(t)}\\
    v(t) &= Ae^{\frac{3}{2}b(t)}
  \end{align*}

  Going over a loop $\sigma:[0,T]\to M$ based at $(0,0)\in M$, we get
  \[
    \begin{pmatrix}
      1 & \Omega(\sigma)\\
      0 & 1
    \end{pmatrix}
  \]
  where
  \[
    \Omega(\sigma) = \int_0^T\left(a'(\tau)e^{\frac{-3a(\tau)+b(\tau)}{2}} + b'(\tau)e^{\frac{-a(\tau)+3b(\tau)}{2}}\right)d\tau
  \]
\end{repex}

\begin{repex}{ex:SO+}
  We are considering $M=\bbR^2$, $g = (3+\cos x)(dx^2+dy^2)$, $\phi = \frac{1}{2}\log(2+\cos x)$.

  We can compute the non-zero weighted Christoffel symbols:
  \begin{align*}
    \tilde{\Gamma}_{xx}^y &= \frac{\sin y}{2(3+\cos y)}\\
    \tilde{\Gamma}_{xy}^x &= \frac{\sin y}{2(2+\cos y)(3+\cos y)}\\
    \tilde{\Gamma}_{yy}^y &= \frac{\sin y(4+\cos y)}{2(2+\cos y)(3+\cos y)}
  \end{align*}

  This matches precisely to the Levi-Civita connection of
  \[
    \tilde{g} = \frac{3+\cos y}{2+\cos y}dx^2 - \frac{3+\cos y}{(2+\cos y)^2}dy^2.
  \]
\end{repex}

\begin{repex}{ex:Heis}
  Consider $M=\bbR^3$ with $g = e^{x}dx^2 + e^{2x+y}dy^2 + e^{2x+2y+z}dz^2$, and $\phi = x+y+z$. The non-zero weighted Christoffel symbols are as follows:

  \begin{align*}
    \tilde{\Gamma}_{xx}^x &= \frac{-3}{2} & \tilde{\Gamma}_{xy}^x &= -1 & \tilde{\Gamma}_{xz}^x &= -1\\
    \tilde{\Gamma}_{yy}^x &= -e^{x+y} & \tilde{\Gamma}_{yy}^y &= \frac{-3}{2} & \tilde{\Gamma}_{yz}^y &= -1\\
    \tilde{\Gamma}_{zz}^x &= -e^{x+2y+z} & \tilde{\Gamma}_{zz}^y &= -e^{y+z} & \tilde{\Gamma}_{zz}^z &= \frac{-3}{2}
  \end{align*}

  Consider a loop $\sigma(t) = (\alpha_1(t),\alpha_2(t),\alpha_3(t))$, and a $\nabla^\phi$-parallel vector field along it $u(t)\partial_x + v(t)\partial_y + w(t)\partial_z$. Then, the parallel transport ODEs are:
  \begin{align*}
    u'(t) &= \left(\frac{3}{2}\alpha_1'(t) + \alpha_2'(t) + \alpha_3'(t)\right)u(t) + \left(\alpha_1'(t)+e^{\alpha_1(t)+\alpha_2(t)}\alpha_2'(t)\right)v(t)\\
          &\qquad\qquad+ \left(\alpha_1'(t)+e^{\alpha_1(t)+2\alpha_2(t)+\alpha_3(t)}\alpha_3'(t)\right)w(t)\\
    v'(t) &= \left(\frac{3}{2}\alpha_2'(t)+\alpha_3'(t)\right)v(t) + \left(e^{\alpha_2(t)+\alpha_3(t)}\alpha_3'(t)+\alpha_2'(t)\right)w(t)\\
    w'(t) &= \frac{3}{2}\alpha_3'(t)w(t)
  \end{align*}

  It is easy to see that the holonomy group is at most the Heisenberg group, since $w(t)$ depends only on the position, not the path, and if $w(t)=0$, then so does $v(t)$, and with $v(t)=w(t)=0$, $u(t)$ also only depends on the position. Therefore, we now only need to verify that we are able to obtain the entire Heisenberg group.

  We first consider the loop $(0,0,0)\to(1,0,0)\to(1,1,0)\to(0,1,0)\to(0,0,0)$ by ``straight'' lines.

  On the first segment, which we paramatrize as $(t,0,0)$, the ODEs become:
  \begin{align*}
    u'(t) &= \frac{3}{2}u(t) + v(t) + w(t)\\
    v'(t) &= 0\\
    w'(t) &= 0
  \end{align*}
  which gives us
  \begin{align*}
    u(t) &= u(0)e^{3t/2} + \frac{2}{3}\left(e^{3t/2}-1\right)v(0) + \frac{2}{3}\left(e^{3t/2}-1\right)w(0)\\
    v(t) &= v(0)\\
    w(t) &= w(0)
  \end{align*}
  giving us the matrix
  \[
    \begin{pmatrix}
      e^{3/2} & \frac{2}{3}e^{3/2}-\frac{2}{3} & \frac{2}{3}e^{3/2}-\frac{2}{3}\\
      0 & 1 & 0\\
      0 & 0 & 1
    \end{pmatrix}
  \]

  On the second segment, our parametrization is $(1,t,0)$, which gives us
  \begin{align*}
    u'(t) &= u(t) + e^{t+1}v(t)\\
    v'(t) &= \frac{3}{2}v(t) + w(t)\\
    w'(t) &= 0
  \end{align*}
  solving we get
  \begin{align*}
    u(t) &= Ae^{t} +\frac{2e}{3}B e^{5t/2} - \frac{2e}{3}Cte^{t}\\
    v(t) &= Be^{3t/2} - \frac{2}{3}C\\
    w(t) &= C
  \end{align*}
  giving us
  \[
    \begin{pmatrix}
      e & \frac{2}{3}e^2 - \frac{2}{3}e^{7/2} & \frac{4}{9}e^{7/2}-\frac{10}{9}e^2\\
      0 & e^{3/2} & \frac{2}{3}e^{3/2} - \frac{2}{3}\\
      0 & 0 & 1
    \end{pmatrix}
  \]

  Parametrize the third segment as $(1-t,1,0)$ to get
  \begin{align*}
    u'(t) &= \frac{-3}{2}u(t) - v(t) - w(t)\\
    v'(t) &= 0\\
    w'(t) &= 0
  \end{align*}
  Solving we get
  \begin{align*}
    u(t) &= e^{-3t/2}u(0) + \frac{2}{3}\left(e^{-3t/2}-1\right) v(0) + \frac{2}{3}\left(e^{-3t/2}-1\right)w(0)\\
    v(t) &= v(0)\\
    w(t) &= w(0)
  \end{align*}
  giving us
  \[
    \begin{pmatrix}
      e^{-3/2} & \frac{2}{3}e^{-3/2}-\frac{2}{3} & \frac{2}{3}e^{-3/2}-\frac{2}{3}\\
      0 & 1 & 0\\
      0 & 0 & 1
    \end{pmatrix}
  \]

  For the last segment, our parametrization is $(0,1-t,0)$, which gives us
  \begin{align*}
    u'(t) &= -u(t) - e^{1-t}v(t)\\
    v'(t) &= \frac{-3}{2}v(t) - w(t)\\
    w'(t) &= 0
  \end{align*}
  solving we get
  \begin{align*}
    u(t) &= Ae^{-t} + \frac{2e}{3}Be^{-5t/2} + \frac{2e}{3}Cte^{-t}\\
    v(t) &= Be^{-3t/2} - \frac{2}{3}C\\
    w(t) &= C
  \end{align*}
  giving us
  \[
    \begin{pmatrix}
      e^{-1} & \frac{2}{3}e^{-3/2}-\frac{2}{3} & \frac{2}{9} + \frac{4}{9}e^{-3/2}\\
      0 & e^{-3/2} & \frac{2}{3}e^{-3/2}-\frac{2}{3}\\
      0 & 0 & 1
    \end{pmatrix}
  \]

  The resulting matrix corresponding to this parallel loop is:
  \[
    \begin{pmatrix}
      1 & \frac{-2\left(\left(e^3+e^2-e+1\right)e^{1/2} + e^3-2e^2-e\right)}{3e^2} & \frac{-2\left(\left(2e^4+2e^3+5e^2+3e-1\right)e^{1/2} - 2e^4 - 8e^3 - e^2\right)}{9e^3}\\
      0 & 1 & 0\\
      0 & 0 & 1
    \end{pmatrix}
  \]

  In particular, this implies that $\mathfrak{hol}^\phi_{0}$ contains an element of the form
  \[
    \begin{pmatrix}
      0 & 1 & x\\
      0 & 0 & 0\\
      0 & 0 & 0
    \end{pmatrix}
  \]
  (note that the value of $x$ does not actually matter for our purposes).
\end{repex}

%%% Local Variables:
%%% mode: latex
%%% TeX-master: "Paper"
%%% End:

\bibliographystyle{amsalpha}
\bibliography{References}

\end{document}
%%% Local Variables:
%%% mode: latex
%%% TeX-master: t
%%% End: